\newtheorem{theorem}{Theorem}
\newtheorem{proposition}[theorem]{Proposition}%
\theoremstyle{definition}
\newtheorem{definition}{Definition}%
\newtheorem{corollary}{Corollary}%
\newcommand{\pic}[4]{\vspace{1ex}\setlength{\unitlength}{.05\textwidth}
\begin{picture}(0,0)(0,0)
\put(#2){\includegraphics[#4]{#1.jpg}} 
\end{picture}\vspace*{#3cm}\vspace{1ex}} 
\def\A{\mathcal{A}}
\def\B{\mathbb{B}}
\def\BB{\mathcal{B}}
\def\E{\mathcal{E}}
\def\Har{\operatorname{\mathcal{H}}}
\def\HH{\mathcal{H}}
\def\H{\mathbb{H}}
\def\M{\mathcal{M}}
\def\Mbar{\overline{\M}}
\def\N{\mathcal{N}}
\def\P{\mathcal{P}}
\def\Q{\mathcal{Q}}
\def\R{\mathbb{R}}
\def\Sc{\operatorname{Sc}}
\def\Vec{\operatorname{Vec}}
\def\Dbar{\overline{\partial}}
\def\s{\raisebox{-2.2ex}{\rule{0pt}{4.5ex}}}
\def\sige{{\hspace{.16ex}\bf e}}
\def\sigi{{\hspace{.16ex}\bf i}}
\newcommand{\U}[2]{U_{#1}^{#2}}
\newcommand{\X}[2]{X_{#1}^{#2}}
\newcommand{\Xbar}[2]{\overline{X}_{#1}^{#2}}
\newcommand{\Xhat}[2]{\widehat{X}_{#1}^{#2}}
\newcommand{\Y}[2]{Y_{#1}^{#2}}
\newcommand{\Yt}[2]{\widetilde{Y}_{#1}^{#2}}
\newcommand{\ZZ}[2]{Z_{#1}^{#2}} 
\newcommand{\ZZhat}[2]{\widehat{Z}_{#1}^{#2}} 
\begin{document}

\begin{center} {\Large
  Bergman kernels for monogenic and contragenic
    functions\\ in the interior and exterior of a sphere}
  \bigskip

\parbox{.8\textwidth}{ 
  \begin{center}
    R. Garc\'ia-Ancona
  \end{center} \vspace{-2ex}
  {\small  \texttt{raga01@ciencias.unam.mx}
  (Facultad de Ciencias, UNAM, Mexico)},
\\
\begin{center} 
  J. Morais
\end{center}\vspace{-2ex}
{\small \texttt{joao.morais@itam.mx}
(Department of Mathematics, ITAM, Mexico)}
\\
\begin{center}
  R. Michael Porter
\end{center}\vspace{-2ex}
{\small \texttt{mike@math.cinvestav.edu.mx}
(Department of Mathematics, CINVESTAV-Quer\'etaro, Mexico)}
}
 
  \bigskip
\parbox{.85\textwidth}{Abstract. Contragenic functions are defined to be reduced-quaternion-valued
  harmonic functions which are orthogonal to all monogenic and
  antimonogenic functions in the $L^2$ norm of a given domain. The
  parallelism between the spaces of contragenic functions in the
  interior and exterior of the unit sphere in $\R^3$ is described in
  detail. Bergman reproducing kernels for the spaces of contragenic
  functions are given, mirroring the corresponding kernels for the
  spaces of vector parts of monogenic functions. Numerical examples
  are given showing the accuracy of truncations of the integral
  kernels.  A striking duality is observed between the basic interior
  contragenic functions and the vector parts of exterior monogenic
  functions, and vice versa.}
\end{center}

\noindent\textbf{Keywords:} Monogenic function, contragenic function, spherical
harmonics, spherical monogenics, Bergman kernel.

\medskip
 
\noindent\textbf{MSC Classification:} Primary 30G35; Secondary 30A05, 33C50, 42C30


\begin{quote} \large
\textit{In memory of our dear friend Frank Sommen, who provided us with inspiration and encouragement.}
\end{quote}


\section{Introduction}
 
This paper aims to extend and unify some fundamental relations among
harmonic, monogenic, and contragenic functions, highlighting the
analogies and differences among $L^2$ spaces of such functions in the
interior and exterior of the unit sphere in $\R^3$.

The solid spherical harmonics are given in spherical coordinates by \cite{Hobson}
\begin{align} \label{eq:defhar}
\U{n,m}{\pm}(x) = \rho^n  P_n^m(\cos \theta) \Phi^\pm_m(\phi),
\end{align}
with the terminology specified in Section \ref{sec:harmonic} below.
For  $n\ge0$, $\U{n,m}{\pm}$ is square integrable in the unit
ball $\B^\sigi$, while for $n\le-2$, it is square integrable in the
exterior  $\B^\sige$ of the ball.

Let $\A\cong\R^3$ denote the real subspace of the quaternions
$\H=\{a_0+a_1e_1+a_2e_2+a_3e_3\}$ for which $a_3=0$ (see Section \ref{sec:monogenic}). We are interested
in the space of $\A$-valued harmonic functions, that is, triples of
$\R$-valued harmonic functions in given domain. As a particular
example, a smooth $\A$-valued function $f$ is called monogenic when
$\Dbar f=0$. (This is equivalent to $f\Dbar=0$, a property which does not generally hold for $\H$-valued functions.) According to \cite{Cacao,Cacao2010}, the basic spherical
monogenic functions of degree $n$ are
\[ \X{n,m}{\pm} = \partial \U{n+1,m}{\pm} \] ($\Dbar,\partial $
defined in Section \ref{sec:monogenic}). An $\A$-valued harmonic
function, which is orthogonal to all monogenic functions and their
conjugates, is called contragenic. In \cite{Alvarez2014}, an orthogonal
basis for the subspace of contragenic functions was constructed for $\B^\sigi$, which when combined with a suitable basis for the sums of monogenic functions, gives an orthogonal basis for the space of all $\A$-valued harmonic functions. A similar construction was given in \cite{GMP} for contragenic functions in the interior of a spheroid of arbitrary eccentricity. Various conversion formulas that relate systems of harmonic and contragenic functions associated with spheroids of differing eccentricity were presented in \cite{GMP2020}, showing that there are common contragenic functions to all spheroids of all eccentricities.

The paper is organized as follows. We summarize the basic facts about harmonic functions in Section
\ref{sec:harmonic}.  In Section \ref{sec:monogenic}, we carry out the
construction of contragenic functions for the exterior domain
$\B^\sige$, highlighting the similarities and differences between
$\B^\sigi$ and $\B^\sige$. Surprisingly, there exist exterior contragenic functions whose scalar parts do not vanish, which was known not to occur in the interior of the ball. Moreover, we point out a previously unnoticed duality relation between the bases of monogenic and contragenic functions.  In
Section \ref{sec:bergman}, we give Bergman-type kernel operators for
the spaces of vector parts of monogenic functions and of contragenic
functions in $\B^\sige$, comparing them to the integral kernel for
$\B^\sigi$ given in \cite{Alvarez2014}. Altogether, we have complementary
projectors from $L^2(\B^\sigma)$ onto the corresponding subspaces.
Numerical examples are shown
in Section \ref{sec:examples}.

It is well known that the Kelvin transform $x\mapsto (1/\vert x\vert)u(x/\vert x\vert^2)$  is a bijective map which sends basic spherical harmonics in $\B^\sigi$ to those in $\B^\sige$ and vice versa \cite[Ch.\ 4]{Axler}. In \cite{Sudbery}, Sudbery proposed an inversion transform for monogenic functions $f \colon \H \to \H$ using $\overline{x}/\vert x\vert^2$ instead of $x/ \vert x\vert^2$. In \cite[p.\ 192]{Gurlebeck2},  Sudbery's transformation was generalized for monogenic, paravector-valued, and homogeneous functions in $\R^{d+1}$, $d \geq 1$. If these operations, besides monogenicity and orthogonality in $L^2$, also respected the canonical structure of space $\A$ of codimension one, most of what we have done here would be trivial. However,  Sudbery's inversion is not applicable to $\A$-valued functions. We have not yet been able to construct a useful transformation for $\A$-valued functions.


\section{Harmonic functions\label{sec:harmonic}}

We will write $\B^\sigi=\{x\in\R^3\colon\ \vert x\vert<1\}$ for the interior of
the unit ball in Euclidean space, and
$\B^\sige=\{x\in\R^3\colon\ \vert x\vert>1\}$ for the exterior domain. We write
$\Har^\sigi$, $\Har^\sige$ for the subspaces of real-valued harmonic
functions in $L^2(\B^\sigi)$, $L^2(\B^\sige)$, and with the symbol $\sigma$
to denote $\sigi$ or $\sige$, we write $\Har^\sigma(n)$ for the
collection elements of $\Har^\sigma$ which are homogeneous of degree $n$ (including $0$ so that these are 
vector subspaces).  The
basic harmonics are expressed by \eqref{eq:defhar} where
$x=(x_0,x_1,x_2)=
(\rho\cos\theta,\rho\sin\theta\cos\phi,\rho\sin\theta\sin\phi)$ with
$0 < \rho < \infty$, $0 < \theta \leq \pi$, and $0 < \phi \leq 2\pi$;
$P_n^m$ are the associated Legendre functions of integer order $m$ and
degree $n$; and $\Phi_m^+(\phi)=\cos (m\phi)$,
$\Phi_m^-(\phi)=\sin (m\phi)$.  The notation $\U{n,0}{-}$ for all $n$
is excluded because it vanishes identically; likewise $\U{n,m}{\pm}$
vanishes when $m>n$. Positive and negative degrees are related \cite[p.\ 192]{Hobson} through
the well known identity $P_{-n}^m(t)=P_{n-1}^m(t)$, giving
\begin{align} \label{eq:Uduality}
\U{n,m}{\pm}(x)= \vert x\vert^{2n+1} \U{-n-1,m}{\pm}(x) .
\end{align}
The harmonic function $ \U{-1,0}{+}(x)=1/\vert x\vert$ is excluded because it
is not in $L^2(\B^\sige)$. However, its partial derivatives are square
integrable and will be used later. Further, it should be noted
that for negative degree $n$,  negative values of the index $m$ are discarded for two separate reasons. The relation
\begin{align} \label{eq:Pminusm}
  P_n^{-m}(t) = \frac{(n-m)!}{(n+m)!} \, P_n^m(t)
\end{align}
implies that for $n\le0$, when $0\le m\le-n$ the functions $\U{n,-m}{\pm}$ duplicate the
functions $\U{n,m}{\pm}$   and therefore are not
considered. It is somewhat less known that for $m>-n$, the relation
\eqref{eq:Pminusm} is not valid, because $(n-m)!$ is infinite (properly
interpreted as a $\Gamma$ function), while $P_n^m(t)$ vanishes.  The
product in \eqref{eq:Pminusm} gives a nontrivial function $P_n^{-m}(t)$ which is interesting in itself, but the
corresponding harmonic function has a singularity when $\theta=\pi$,
so likewise we do not consider $\U{n,m}{\pm}$ for $m>-n$. In summary,
the $L^2$ spaces of harmonic functions correspond to the index sets
\begin{align*}
  J^\sigi=\{n\colon\ n\ge0\}, \ J^\sige=\{n\colon\ n\le -2\}
\end{align*}
in $\B^i$, $\B^e$ respectively.
Then for $n\in J^\sigma$, one requires $m\in I_\HH^\sigma$, where
\begin{align*}
  I_\HH^\sigi(n)=\{0\leq m\leq n\},\ 
  I_\HH^\sige(n)=\{0\leq m\leq -n-1\}.
\end{align*}
The redundancy in that the symbol $\sigma$ in $I_\HH^\sigma(n)$ is
determined by the sign of $n$ will assist in readability.
 
It is well known, and easy to verify, that for $n\in J^\sigma$, the set
\begin{align}
\{\U{n,m}{\pm} \vert_{\B^{\sigma}}\colon\ m\in I_{\HH^\sigma(n)}\} 
\end{align}
is an orthogonal basis for $\Har^\sigma(n)$, containing $2n+1$ ($\sigma=\sigi$) and $-2n-1$ ($\sigma=\sige$) elements, respectively (see \cite{Axler,Hobson}).
   The union of these graded bases over $n\in J^\sigma$ is an orthogonal basis for $\Har^\sigma$.  Therefore the coefficients of any element of $\Har^\sigma$ with respect to this basis may be calculated by use of the following result.
 
\begin{proposition} \label{prop:normU}
Let $\sigma=\sigi$ or $\sige$, and $n\in J^\sigma$,
     $m\in I_\HH^\sigma(n)$. The $L^2$ norms (with respect to the Lebesgue measure) of the solid spherical
  harmonics in $\B^\sigma$ are given by
\begin{align*}
    \|\U{n,m}{\pm}\|_\sigma^2 =
\dfrac{2\pi(1+\delta_{0,m})(\vert n+1/2\vert+m-1/2)!}{(2n+3)(2n+1)(\vert n+1/2\vert-m-1/2)!},
\end{align*}
where $\delta_{n,m}$ is the Kronecker symbol.
\end{proposition}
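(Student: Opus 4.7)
The plan is to compute the norm by separation of variables in spherical coordinates $(\rho,\theta,\phi)$, handling the radial factor case-by-case for $\sigma=\sigi,\sige$ and reducing the polar factor to a classical Legendre integral.

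First I would write the Lebesgue volume element as $\rho^2\sin\theta\,d\rho\,d\theta\,d\phi$ and observe that
\[
\|\U{n,m}{\pm}\|_\sigma^2
=\Bigl(\int_{R_\sigma}\rho^{2n+2}\,d\rho\Bigr)
\Bigl(\int_{-1}^{1}[P_n^m(t)]^2\,dt\Bigr)
\Bigl(\int_0^{2\pi}[\Phi_m^\pm(\phi)]^2\,d\phi\Bigr),
\]
where $R_\sigi=(0,1)$ and $R_\sige=(1,\infty)$, and the substitution $t=\cos\theta$ is used. The azimuthal factor is immediately $\pi(1+\delta_{0,m})$. The radial factor evaluates to $1/(2n+3)$ when $\sigma=\sigi$ (for $n\ge 0$) and to $-1/(2n+3)$ when $\sigma=\sige$ (for $n\le -2$, so $2n+3<0$); in either case it equals $1/\vert 2n+3\vert$.

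For the polar factor I would invoke the classical orthogonality identity
\[
\int_{-1}^{1}[P_k^m(t)]^2\,dt=\frac{2}{2k+1}\cdot\frac{(k+m)!}{(k-m)!}\qquad (k\ge 0,\ 0\le m\le k),
\]
applying it directly with $k=n$ in the interior case, and in the exterior case first using the identity $P_n^m=P_{-n-1}^m$ (recalled in \eqref{eq:Uduality} of the paper) to replace $P_n^m$ by $P_k^m$ with $k=-n-1\ge 1$ and $0\le m\le k$. This gives a polar factor of $\frac{2}{2n+1}\cdot\frac{(n+m)!}{(n-m)!}$ for $\sigma=\sigi$ and $\frac{2}{-2n-1}\cdot\frac{(-n-1+m)!}{(-n-1-m)!}$ for $\sigma=\sige$.

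Finally I would combine the three factors. The unifying step is the identity $\vert n+1/2\vert =n+1/2$ for $n\ge 0$ and $\vert n+1/2\vert =-n-1/2$ for $n\le -2$, so that $\vert n+1/2\vert +m-1/2$ and $\vert n+1/2\vert -m-1/2$ reproduce $n+m,n-m$ (interior) and $-n-1+m,-n-1-m$ (exterior). Checking that the signs of $2n+3$ and $2n+1$ both flip in the exterior case, so that $(2n+3)(2n+1)>0$ in either case, yields the stated formula in one line. There is no real obstacle; the only care required is the sign bookkeeping in the exterior radial integral and the appeal to the Legendre duality to place the polar integral within the range where the standard norm formula applies.
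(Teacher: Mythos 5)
Your computation is correct and is essentially the calculation the paper's proof alludes to: the interior formula it cites from the literature is exactly your separation-of-variables evaluation (radial, Legendre, and azimuthal factors), and the exterior case is the "similar calculation" the authors mention, which you carry out correctly via $P_n^m=P_{-n-1}^m$ and the sign bookkeeping in $(2n+3)(2n+1)$. No gaps; your write-up simply makes explicit what the paper leaves to references.
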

\begin{proof}
  The formula for $\B^\sigi$ appears in \cite{Muller,Sansone}.  For $\B^\sige$, we have a similar calculation
using \eqref{eq:defhar}.
\end{proof}


\section{Monogenic and related functions\label{sec:monogenic}}

\subsection{$\A$-valued monogenic functions}

Although $\A$ is
not closed under quaternionic multiplication (which is defined, as is
usual, so that $e_i^2=-1$ and $e_ie_j=-e_je_i$ for $i\not=j$), a great
deal of the analysis analogous to that of complex numbers
\cite{Gurlebeck2,GuerlebeckHabethaSproessig2016,MoraisHabilitation2021} holds with respect to the
Cauchy-Riemann (or Fueter) operators
\begin{align}  \label{eq:defD}
  \Dbar = \frac{\partial}{\partial x_0} +
       \sum_{i=1}^2 e_i \frac{\partial}{\partial x_i}, \quad
   \partial = \frac{\partial}{\partial x_0} -
       \sum_{i=1}^2e_i \frac{\partial}{\partial x_i}.
\end{align}
A smooth function $f$ defined in an open set of $\R^3$ is called
\textit{monogenic} when $\Dbar f=0$ identically; the terms
``hyperholomorphic'' and ``regular'' are also commonly used. We write $\M^\sigma$ for
the $\R$-subspace of $\A$-valued monogenic functions in $L^2(\B^\sigma)$,
and $\M^\sigma(n)$ for the subspace of $\M^\sigma$ homogeneous functions of degree
$n$.  In the following, the combination of $m=0$ with a superscript
``$-$'' is always excluded. For $m\in I_\HH^\sigma(n+1)$, we define
\begin{align} \label{eq:defX}
  \X{n,m}{\pm} = \partial \U{n+1,m}{\pm} .
\end{align}
(In some presentations, the operator $(1/2)\partial$ is used
instead of $\partial$ to provide a closer relation to geometric
properties of the derivative.)
These monogenic functions have not been studied in detail for negative $n$.

Let us write
\begin{align}
  I_\M^\sigma(n) =  I_\HH^\sigma(n+1) 
\end{align}
which has meaning for $n=-2$ if we allow $I_\HH^\sige(-1)=1$.

\begin{proposition} \label{prop:basisX}
  Let $n\in J^\sigma$. The collection of  functions
  \begin{align*}
    \{\X{n,m}\pm \vert_{\B^\sigma}\colon\ m\in I_\M^\sigma(n) \}
  \end{align*}
  is an
  orthogonal basis of $\M^\sigma(n)$ with respect to the scalar valued
  inner product
\begin{align} \label{eq:scalarproduct}
  \langle f,g \rangle_\sigma = \int_{\B^\sigma}
  (f_0\,g_0 + f_1\,g_1 + f_2\,g_2 ) \,dV.
\end{align}
Written out in coordinates,
\begin{align}  
\X{n,m}{\pm}&=(n+m+1)\U{n,m}{\pm} + \dfrac{1}{2}\big(\U{n,m+1}{\pm}-\alpha_{n,m}\U{n,m-1}{\pm}\big)e_1\nonumber\\
&\quad \mp\dfrac{1}{2}\big(\U{n,m+1}{\mp}+\alpha_{n,m}\U{n,m-1}{\mp}\big)e_2, \label{eq:solidspmonogenic}
\end{align}
where $\alpha_{n,m}=(n+m)(n+m+1)$.
\end{proposition}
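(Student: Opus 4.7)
The plan is to verify the stated identity \eqref{eq:solidspmonogenic} by direct computation, and then read off the monogenicity, orthogonality, and spanning properties from it.

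First I would establish the coordinate formula. Since $U_{n+1,m}^{\pm}$ is scalar-valued, applying $\partial$ as defined in \eqref{eq:defD} yields
\[
  \X{n,m}{\pm} \;=\; \frac{\partial U_{n+1,m}^{\pm}}{\partial x_0}
   \;-\; e_1\frac{\partial U_{n+1,m}^{\pm}}{\partial x_1}
   \;-\; e_2\frac{\partial U_{n+1,m}^{\pm}}{\partial x_2},
\]
which is automatically $\A$-valued and homogeneous of degree $n$. The three components are obtained from the classical differentiation recurrences for solid spherical harmonics (equivalent to the Legendre recurrences that express $\partial_{x_0}$ as a shift in $n$ and $\partial_{x_1}\pm i\partial_{x_2}$ as shifts in $m$ with $\pm\leftrightarrow\mp$ exchange). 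Carrying out these substitutions with the normalization of \eqref{eq:defhar} reproduces the coefficients $n+m+1$, $\frac12$ and $\frac12\alpha_{n,m}$ in \eqref{eq:solidspmonogenic}; this part is the routine but bookkeeping-heavy calculation. Identity \eqref{eq:Uduality} plus the fact that the recurrence relations on $P_n^m$ are polynomial in $n$ allow the same computation to go through without change in the exterior case $n\le-2$, and in particular for the boundary case $n=-2$, $m=0$, where $\X{-2,0}{+}=\partial(1/|x|)$ provides the square-integrable derivatives referred to after \eqref{eq:Uduality}.

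Monogenicity is then immediate: $\Dbar$ and $\partial$ act on scalar functions as commuting operators whose product is the Laplacian, because the cross terms $e_ie_j+e_je_i$ vanish for $i\neq j$; hence $\Dbar \X{n,m}{\pm}=\Delta U_{n+1,m}^{\pm}=0$. Thus $\X{n,m}{\pm}\in\M^{\sigma}(n)$.

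For orthogonality with respect to \eqref{eq:scalarproduct}, I would expand $\langle \X{n,m}{\pm},\X{n,m'}{\pm'}\rangle_\sigma$ as the sum of three scalar inner products using \eqref{eq:solidspmonogenic} and invoke the orthogonality of the $\{\U{n,k}{\pm}\}_k$ from Proposition \ref{prop:normU}. Only terms indexed by the same $\U{n,k}{\pm}$ survive, and for $m\neq m'$ the residues from the $e_1$ and $e_2$ components cancel thanks to the sign difference $\pm$ in \eqref{eq:solidspmonogenic} together with the identity $\|\U{n,k}{+}\|_\sigma=\|\U{n,k}{-}\|_\sigma$ for $k\ge1$ provided by Proposition \ref{prop:normU}. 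The cases $m'=m\pm1$, $m'=m\pm 2$ must be checked separately; the other pairings are trivially zero by $L^2$-orthogonality of distinct $\U{n,k}{\pm}$.

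Finally, to show that the set is a basis and not merely an orthogonal family, I would count: $|I_\M^\sigi(n)|=|I_\HH^\sigi(n+1)|=2n+3$ for $n\ge 0$, and $|I_\M^\sige(n)|=|I_\HH^\sige(n+1)|=-2n-3$ for $n\le -2$. For the interior, this matches the known dimension of $\M^\sigi(n)$ from \cite{Cacao,Cacao2010}. For the exterior, I would argue the same dimension count by noting that $\partial\colon\Har^\sige(n+1)\to\M^\sige(n)$ has a one-dimensional kernel (spanned by the constant in the scalar part when $n+1=0$, which does not arise here since $n+1\le -1$), or, more cleanly, that any $f\in\M^\sige(n)$ with all three components in the $L^2$-closure of $\Har^\sige(n)$ can be decomposed in the basis $\{\U{n,m}{\pm}\}$ and the equation $\Dbar f=0$ forces the coefficients to lie in the image of the above linear map. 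The main obstacle I anticipate is exactly this last verification in the exterior setting: ensuring that the derivative recurrences and the dimension-counting argument, both standard for $n\ge0$, genuinely carry over for $n\le -2$ including the delicate boundary value $n=-2$ where $\U{-1,0}{+}=1/|x|$ is excluded from $\Har^\sige$ but whose derivative $\partial(1/|x|)$ must be retained in $\M^\sige(-2)$.
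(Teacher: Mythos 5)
Your proposal is correct in substance but takes a genuinely different route from the paper. The paper's own proof is essentially a citation: the basis property for $\B^\sigi$ is taken from \cite{Cacao,Cacao2010}, the coordinate representation \eqref{eq:solidspmonogenic} from \cite{MoraisGur}, and the exterior basis from \cite{MoraisNguyenKou2016}; the only added ingredient is the remark that $(\partial/\partial x_0)\U{n,m}{\pm}=(n+m)\U{n-1,m}{\pm}$ remains valid for $n\le-2$, together with the clarification (stated right after the proof) that $\partial$ is an isomorphism of $\Har^\sigma(n+1)$ onto $\M^\sigma(n)$. You instead propose a self-contained verification: derive \eqref{eq:solidspmonogenic} from the differentiation recurrences, get monogenicity from the factorization $\Dbar\partial=\Delta$ on scalar functions, prove orthogonality by expanding in the $\U{n,k}{\pm}$ and using $\|\U{n,k}{+}\|_\sigma=\|\U{n,k}{-}\|_\sigma$ (the only nontrivial pairings are $m'=m\pm2$, where the cancellation you describe does occur, including the boundary case $m=0$ via $\U{n,-1}{\pm}=\mp(1/n(n+1))\U{n,1}{\pm}$), and finish with a dimension count. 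This buys independence from the literature, at the cost that the two heaviest steps remain sketches: the recurrence bookkeeping for negative $n$, and---the thinnest point---the surjectivity of $\partial\colon\Har^\sige(n+1)\to\M^\sige(n)$, equivalently $\dim\M^\sige(n)=-(2n+3)$. Your injectivity observation (kernel of $\partial$ on scalars consists of constants) gives linear independence, which orthogonality already provides, but not spanning; the ``Riesz system forces the coefficients into the image'' argument is the right idea yet is only asserted, whereas the paper imports exactly this fact from \cite{MoraisNguyenKou2016}. Carrying out that one degreewise computation (or quoting the reference) would make your argument complete.
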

 
The expression \eqref{eq:solidspmonogenic} takes a simpler form for
the extreme values of the indices, that is, $m \in \{0, n, n+1\}$, which is
obtained in each case by taking into account the relations
$\U{n,-1}{\pm} =\mp(1/n(n+1))\U{n,1}{\pm}$ and $\U{n,n+1}{\pm}=0$, respectively.

\begin{proof}
  The basis for $\B^i(n)$ (i.e., $n\ge0$) was given in \cite{Cacao} and \cite{Cacao2010}
  (even more, a basis is given there for full-quaternion valued
  monogenics), while the representations \eqref{eq:solidspmonogenic}
  of the coordinate functions were given in \cite{MoraisGur} for
  $\B^i(n)$. The basis for $\B^e(n)$ was given in \cite{MoraisNguyenKou2016} and used in
  \cite{MoraisHabilitation2021}.
In particular, the expression \eqref{eq:solidspmonogenic} involves the relation 
$(\partial/\partial x_0) \U{n,m}{\pm}=(n+m)\U{n-1,m}{\pm}$ which is also valid for $n\leq-2$.
\end{proof}

It is worth clarifying that the references cited in this proof justify
that the $\X{n,m}{\pm}$ form a basis via the observation that the
operator $\partial$ in \eqref{eq:defX} is an isomorphism from
$\Har^\sigma(n+1)$ onto $\M^\sigma(n)$.  From
\eqref{eq:solidspmonogenic} comes the well known fact that the set of
scalar parts of monogenic functions in the interior of the ball,
$\{\Sc f\colon\ f\in\M^\sigi\}$ is equal to $\Har^\sigi$. Perhaps
surprisingly, it also shows that $\{\Sc f\colon\ f\in\M^\sige\}$ is
equal to the orthogonal complement in $\Har^\sige$ of the subspace
$\{\U{n,-n+1}{\pm}\colon\ n\in J^\sige\}$ .

The following result, verified by a routine calculation, highlights again the computational similarity between the interior and exterior cases.

\begin{proposition} \label{prop:normX}
  Let  $n\in J^\sigma$,  $m\in I_\M^\sigma(n)$. Then
\begin{align*}
  \|\X{n,m}{\pm}\|_\sigma^2 = 
  \dfrac{2\pi(n+1) (\delta_{0,m}+1) (\vert n+3/2\vert+m-1/2)!}{(2n+3)(\vert n+3/2\vert-m-1/2)!}. 
\end{align*}
\end{proposition}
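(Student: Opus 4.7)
The plan is to derive the norm directly from the explicit expansion \eqref{eq:solidspmonogenic}, using only orthogonality of the basic solid spherical harmonics at fixed homogeneity degree and the norm formula of Proposition \ref{prop:normU}. Recall that for fixed $n$, the functions $\U{n,m}{+}, \U{n,m'}{-}, \U{n,m}{+}, \U{n,m''}{+}$ with $m\ne m''$ (and the analogous pairings) are mutually orthogonal in $L^2(\B^\sigma)$, because the trigonometric factors $\Phi_m^\pm(\phi)$ and the associated Legendre factors $P_n^m(\cos\theta)$ are orthogonal on their respective intervals. Since the inner product \eqref{eq:scalarproduct} splits into independent contributions from the scalar, $e_1$, and $e_2$ components, the cross terms in $\|\X{n,m}{\pm}\|_\sigma^2$ vanish.

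Concretely, squaring \eqref{eq:solidspmonogenic} yields
\begin{align*}
\|\X{n,m}{\pm}\|_\sigma^2 &= (n+m+1)^2 \|\U{n,m}{\pm}\|_\sigma^2 \\
 &\quad + \tfrac{1}{4}\bigl(\|\U{n,m+1}{\pm}\|_\sigma^2 + \alpha_{n,m}^2\|\U{n,m-1}{\pm}\|_\sigma^2\bigr) \\
 &\quad + \tfrac{1}{4}\bigl(\|\U{n,m+1}{\mp}\|_\sigma^2 + \alpha_{n,m}^2\|\U{n,m-1}{\mp}\|_\sigma^2\bigr).
\end{align*}
Substituting the expressions from Proposition \ref{prop:normU}, each of the five terms acquires a common factor of $2\pi/((2n+3)(2n+1))$, and the factorial quotients can be simplified using $(|n+1/2|+m+1/2)! = (|n+1/2|+m+1/2)(|n+1/2|+m-1/2)!$ together with $\alpha_{n,m}^2 = (n+m)^2(n+m+1)^2$. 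After collecting terms over the common denominator $(|n+1/2|-m-1/2)!$ and simplifying, the factor $(2n+1)$ cancels and one obtains the claimed expression, with $|n+3/2|$ appearing uniformly in place of $n+3/2$ (interior) or $-n-3/2$ (exterior).

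The routine but tedious part of the argument is the bookkeeping of the edge indices: for $m=0$ the $e_2$ part drops out (since $\U{n,0}{-}\equiv 0$) but is balanced by the Kronecker factor $1+\delta_{0,m}$ that appears in $\|\U{n,0}{\pm}\|_\sigma^2$; for $m=1$, the term $\U{n,m-1}{\pm}$ is a scalar multiple of $\U{n,1}{\pm}$ via $\U{n,-1}{\pm}=\mp (1/n(n+1))\U{n,1}{\pm}$; and for the top values $m=n+1$ (interior) or $m=-n-2$ (exterior), the terms $\U{n,m+1}{\pm}$ vanish outright since they fall outside $I_\HH^\sigma(n)$. In each case, one verifies that substitution of these relations into the general expansion produces exactly the same closed form, so that no separate statement is needed. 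The principal obstacle is simply checking that the same formula in $|n+3/2|$ emerges cleanly in both signs of $n$; this follows because Proposition \ref{prop:normU} already packages the two cases in this unified way.
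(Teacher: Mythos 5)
Your overall strategy---expand \eqref{eq:solidspmonogenic}, invoke the orthogonality of the degree-$n$ solid harmonics together with Proposition \ref{prop:normU}, and simplify---is exactly the ``routine calculation'' the paper leaves unstated, and your generic computation is sound: for $m\ge 2$ the cross terms do vanish, the common factor $2\pi/((2n+3)(2n+1))$ appears, and the identity $(n+m+1)(n-m+1)+\tfrac12\bigl[(n-m)(n-m+1)+(n+m)(n+m+1)\bigr]=(n+1)(2n+1)$ (with its exterior analogue) produces the cancellation of $2n+1$ and the stated closed form.

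The gap is in the edge-index bookkeeping, which you rightly call the crux but describe incorrectly in two places where, if carried out as written, the computation fails. First, at $m=0$ the blanket claim that the cross terms vanish is false: the $e_1$ component of $\X{n,0}{+}$ is $\tfrac12(\U{n,1}{+}-\alpha_{n,0}\U{n,-1}{+})$ and $\U{n,-1}{+}$ is \emph{proportional} to $\U{n,1}{+}$, so the two terms combine rather than add Pythagoreanly; after substituting $\U{n,-1}{\pm}=\mp(1/(n(n+1)))\U{n,1}{\pm}$ one gets $\X{n,0}{+}=(n+1)\U{n,0}{+}+\U{n,1}{+}e_1-\U{n,1}{-}e_2$, so the $e_2$ part does not ``drop out'' (it involves $\U{n,1}{-}$ and $\U{n,-1}{-}$, not $\U{n,0}{-}$), and your displayed expansion applied literally at $m=0$ gives $2\pi(n+1)(3n+2)/((2n+3)(2n+1))$ instead of the correct $4\pi(n+1)/(2n+3)$. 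You have also swapped the two low edge cases: the proportionality relation belongs to $m=0$, while at $m=1$ the only point is that $\U{n,0}{-}\equiv 0$ is compensated by the factor $1+\delta_{0,0}=2$ in $\|\U{n,0}{+}\|^2$, so the generic expansion survives unchanged there. Second, in the exterior the top index $m=-n-2$ is not an edge case at all: $\U{n,m+1}{\pm}=\U{n,-n-1}{\pm}$ lies inside $I_\HH^\sige(n)$ and does not vanish (these are precisely the harmonics of Proposition \ref{prop:scalarcontragenics}); discarding that term, as your text prescribes, gives a wrong norm (for $n=-4$, $m=2$ one gets $288\pi/35$ instead of $1008\pi/35$), whereas the generic expansion already yields the correct value there with no modification. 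Once $m=0$ is redone by substituting the proportionality relation before squaring (which is where the $(1+\delta_{0,m})$ factor in the vector contribution actually originates) and the spurious exterior edge case is removed, your argument does establish the stated formula in both the interior and exterior cases.
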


The basic monogenic functions satisfy the following ``Appell-type property'' with respect to the differential operator $\partial$ \cite{CGB2006}:
\begin{align} \label{eq:appell}
  \partial \X{n,m}{\pm} = 2(n+m+1) \X{n-1,m}{\pm}.
\end{align}
This is well known for $n\ge0$,
with the understanding that it is
for $0\le m\le n$ because $\partial \X{n,n+1}{\pm}=0$. It is easily seen that \eqref{eq:appell} also holds for $n\le-3$. Thus
$\partial\colon\M^\sigma(n)\to\M^\sigma(n-1)$ is surjective for
$\sigma=\sigi$ (where the dimension of the subspace of monogenic polynomials increases with the degree $n$) and injective for $\sigma=\sige$ (where the dimension increases with $-n$).


\subsection{Ambigenic functions}

It is easily seen that a function $f$ will be \textit{antimonogenic}; that is,
$\partial f=0$, if and only if the conjugate function $\overline{f}$
is monogenic.  Functions which are simultaneously monogenic and
antimonogenic, i.e.,\ in the space $\M^\sigma\cap\Mbar^\sigma$, are
called \textit{monogenic constants} \cite{CGB2006}. Monogenic
constants do not depend on $x_0$ and can be expressed as
$f = a_0+f_1e_1+f_2e_2$ where $a_0\in\R$ is a constant, and $f_1-if_2$
is an ordinary holomorphic function of the complex variable
$x_1+ix_2$.  

In $\B^\sigi$, the monogenic constants of degree $n\ge1$
are generated by, $\X{n,n+1}{\pm}$; i.e.,
$\M^\sigma(n)\cap\Mbar^\sigma(n)$ has dimension 2. For degree $n=0$,
the dimension is 3 because all constant functions (in particular
$1$, $e_1$, $e_2$) are both monogenic and antimonogenic.  For
negative degree the situation is somewhat different:

\begin{proposition} \label{prop:noexteriormc}
  There are no nonzero square-integrable monogenic constants in $\B^\sige$.
\end{proposition}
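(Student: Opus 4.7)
The plan is to invoke the structure theorem for monogenic constants recalled in the text preceding the proposition: any such $f$ on $\B^\sige$ is independent of $x_0$ and decomposes as $f=a_0+f_1e_1+f_2e_2$, where $a_0\in\R$ is constant and $g:=f_1-if_2$ is an ordinary holomorphic function of $z=x_1+ix_2$. Since for every $(x_1,x_2)\in\R^2$ one can choose $|x_0|$ large enough that $(x_0,x_1,x_2)\in\B^\sige$, the projection of $\B^\sige$ onto the $x_1x_2$-plane equals all of $\R^2$, and $g$ is in fact holomorphic on all of $\C$.

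The whole argument then rests on a single geometric observation: for every $(x_1,x_2)$ with $x_1^2+x_2^2>1$, the entire line $\{(x_0,x_1,x_2)\colon x_0\in\R\}$ is contained in $\B^\sige$. I would apply Fubini to obtain
\begin{equation*}
\|f\|_\sige^2 \;\geq\; \int_{\{x_1^2+x_2^2>1\}}\biggl(\int_{\R}\bigl(a_0^2+|g(x_1+ix_2)|^2\bigr)\,dx_0\biggr)dx_1\,dx_2.
\end{equation*}
Because the integrand of the inner integral is independent of $x_0$, that integral equals $+\infty$ at any point where $a_0\neq 0$ or $g(x_1+ix_2)\neq 0$. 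Since $\|f\|_\sige$ is finite by hypothesis, this forces $a_0=0$ and, by the continuity of $g$, $g\equiv 0$ on the open set $\{|z|>1\}\subset\C$.

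Finally, since $g$ is entire, its vanishing on the nonempty open connected set $\{|z|>1\}$ forces $g\equiv 0$ throughout $\C$ by the identity theorem. Thus $f_1\equiv f_2\equiv 0$, and together with $a_0=0$ we conclude $f\equiv 0$. There is no substantive obstacle in this proof: the real content is just that $\B^\sige$ is unbounded in the $x_0$-direction, so an $L^2$ function that is independent of $x_0$ cannot be nonzero on the outer cylinder $\{x_1^2+x_2^2>1\}$, and holomorphy of $g$ propagates the zero to the interior as well.
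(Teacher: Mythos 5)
Your proof is correct, but it takes a genuinely different route from the paper's. You invoke the structure theorem $f=a_0+f_1e_1+f_2e_2$ with $g=f_1-if_2$ holomorphic, apply Tonelli/Fubini over the outer cylinder $\{x_1^2+x_2^2>1\}\times\R\subset\B^\sige$ to force $a_0=0$ and $g\equiv0$ there, and then propagate the vanishing inward by the identity theorem. The paper argues more directly and without any of this structure: if $f\not\equiv0$, pick any point $p\in\B^\sige$ (with $p_0\neq0$) where $\vert f\vert>\epsilon$ on a neighborhood; since $f$ is independent of $x_0$, the bound $\vert f\vert>\epsilon$ persists on a half-infinite cylinder in the $x_0$-direction contained in $\B^\sige$, contradicting $f\in L^2(\B^\sige)$. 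The paper's argument is thus more elementary and more general (it works, as the paper notes, for any domain exterior to a bounded set, and never uses the holomorphic description of monogenic constants), and it also sidesteps a small imprecision in your write-up: over points $(x_1,x_2)$ of the closed unit disk the fiber of $\B^\sige$ has two connected components, so "$g$ is holomorphic on all of $\C$" is not immediate as a single-valued statement — a priori the values on the upper and lower sheets could differ. This is easily patched (treat the two sheets separately and apply the identity theorem to each, or simply note that $f$ is real-analytic on the connected set $\B^\sige$ and vanishes on the open subset over $\{x_1^2+x_2^2>1\}$, hence vanishes identically), but your argument as written glosses over it. What your approach buys in exchange is an explicit quantitative picture (the $L^2$ norm dominates an integral of $a_0^2+\vert g\vert^2$ over infinite lines), at the cost of relying on the planar holomorphic structure that the paper's proof deliberately avoids.
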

\begin{proof}
  The argument holds in fact for all domains exterior to a bounded
  set.  Suppose that $f\in\M^\sige\cap\Mbar^\sige$ is not identically
  zero.  Since $f(x )$ does not depend on the variable $x_0$, we may
  take a point $p=(p_0,p_1, p_2)\in\B^\sige$ such that
  $\vert f(x)\vert>\epsilon$ in a neighborhood of $p$. Then $\vert f(x)\vert>\epsilon$
  in the half-infinite cylinder formed of points $(x_0,x _1,x_2)$ such that
  $(x_1,x_2)$ varies in a neighborhood of $(p_1,p_2)$ while
  $x_0\ge p_0$ in case $p_0>0$, or $x_0\le p_0$ in case $p_0<0$. In
  either case, we have $f\not\in L^2(\B^\sige)$, contrary to
  hypothesis.
\end{proof}

Functions in $\M^\sigma+\Mbar^\sigma$, that is, those which can be
expressed as the sum of a monogenic and an antimonogenic function, are
termed \textit{ambigenic}. All monogenic and all antimonogenic
functions are trivially ambigenic. The relations
$\Sc f=(1/2)(f+\overline{f})$ and $\Vec f = (1/2)(f-\overline{f})$
show that the scalar parts and the vector parts of monogenic functions
are ambigenic.

Define the ambigenic functions
\begin{align}
  \Y{n,m}{\sigma, \pm} &= \X{n,m}{\pm},   \nonumber\\
  \Yt{n,m}{\sigma,\pm} &= \Xbar{n,m}{\pm} -
  \beta_{n,m} \X{n,m}{\pm}
\end{align}
in $\B^\sigma$, where 
\[ \beta_{n,m} = \beta^\sigma_{n,m} =
  \dfrac{\langle\X{n,m}{+},\Xbar{n,m}{+}\rangle_\sigma}{\|\X{n,m}{+}\|^2_\sigma} =
  \dfrac{\langle\X{n,m}{-},\Xbar{n,m}{-}\rangle_\sigma}{\|\X{n,m}{+}\|^2_\sigma} = \dfrac{n-2m^2+1}{(n+1)(2n+1)}.
\]

\begin{proposition}  \label{prop:ambibasis}
For each $n\in J^\sigma$, the collection of   functions
 \begin{align*}
   \{ \Y{n,m}{\sigma,\pm},\  \Yt{n,m}{\sigma,\pm}\colon\ m\in I^\sigma(n+1) \}
 \end{align*}
 is an orthogonal basis for the subspace of ambigenic functions $\M^\sigma(n)+\Mbar^\sigma(n)$. The union of these bases over $n\in J^\sigma$ is an orthogonal
 basis of $\M^\sigma+\Mbar^\sigma$.
\end{proposition}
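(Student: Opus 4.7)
My plan is to establish the proposition degree by degree and then assemble the graded pieces into a basis of $\M^\sigma+\Mbar^\sigma$. Fix $n\in J^\sigma$. By Proposition \ref{prop:basisX} the family $\{Y_{n,m}^{\sigma,\pm}\}=\{X_{n,m}^{\pm}\}$ is already an orthogonal basis of $\M^\sigma(n)$. Each $\widetilde{Y}_{n,m}^{\sigma,\pm}=\overline{X}_{n,m}^{\pm}-\beta_{n,m}X_{n,m}^{\pm}$ lies in $\M^\sigma(n)+\Mbar^\sigma(n)$, and the rearrangement $\overline{X}_{n,m}^{\pm}=\widetilde{Y}_{n,m}^{\sigma,\pm}+\beta_{n,m}Y_{n,m}^{\sigma,\pm}$ shows that the proposed family also spans $\Mbar^\sigma(n)$, hence the whole of $\M^\sigma(n)+\Mbar^\sigma(n)$.

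What remains is orthogonality, and the crux, which I expect to be the main obstacle, is the cross identity
\[
\langle X_{n,m}^{\pm},\overline{X}_{n,m'}^{\pm'}\rangle_\sigma=0\quad\text{whenever }(m,\pm)\neq(m',\pm').
\]
I would prove it by substituting the coordinate formula \eqref{eq:solidspmonogenic}: since conjugation flips the signs of the $e_1$- and $e_2$-components, the scalar-valued inner product becomes a signed sum of $L^2$ products of basic harmonics $U_{n,k}^{\pm}$. Orthogonality of the $U$'s annihilates almost every term, leaving only configurations where the indices match after a shift $m'=m\pm 2$ with $\pm=\pm'$. In those surviving cases the $e_1$- and $e_2$-contributions then cancel in pairs, using the coefficient $\alpha_{n,m}$ together with the equality $\|U_{n,k}^{+}\|_\sigma=\|U_{n,k}^{-}\|_\sigma$ for $k\ge 1$ that drops out of Proposition \ref{prop:normU}. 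Once this identity is in place, the orthogonality $\widetilde{Y}_{n,m}^{\sigma,\pm}\perp Y_{n,m'}^{\sigma,\pm'}$ follows from the definition of $\beta_{n,m}$ (matched indices) together with the identity above (unmatched indices); orthogonality among the $\widetilde{Y}$'s is obtained by expanding the bilinear form into four pieces and using both the identity and the elementary fact $\langle\overline{f},\overline{g}\rangle_\sigma=\langle f,g\rangle_\sigma$.

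To pass from the graded statements to an orthogonal basis of $\M^\sigma+\Mbar^\sigma$, I would note that every $Y$ and every $\widetilde{Y}$ is $\A$-valued harmonic with components that are solid spherical harmonics of the prescribed homogeneous degree; hence elements of different degrees are $L^2(\B^\sigma)$-orthogonal via the standard radial-angular factorization underlying Proposition \ref{prop:normU}. Completeness of the union then follows from the completeness of $\{X_{n,m}^{\pm}\}$ for $\M^\sigma$ combined with the decomposition of any element of $\M^\sigma+\Mbar^\sigma$ into its homogeneous-degree components.
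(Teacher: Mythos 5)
Your orthogonality argument is essentially the paper's: the paper also reduces everything to the cross inner products $\langle\Xbar{n_1,m_1}{\pm},\X{n_2,m_2}{\pm'}\rangle_\sigma$, records that these vanish except when the indices and signs match exactly (in which case they equal $\|\Sc\X{n,m}{\pm}\|^2_\sigma-\|\Vec\X{n,m}{\pm}\|^2_\sigma$), and then notes that orthogonality of the $\Y{n,m}{\sigma,\pm}$, $\Yt{n,m}{\sigma,\pm}$ follows from the definition of $\beta_{n,m}$; your description of how the surviving $m'=m\pm2$ terms cancel via $\alpha_{n,m}$ and $\|\U{n,k}{+}\|_\sigma=\|\U{n,k}{-}\|_\sigma$ is precisely the content of the paper's ``tedious calculation,'' and it does go through (including the $m=0$ edge case handled with $\U{n,-1}{\pm}$). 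Where you genuinely differ is the basis step: the paper argues by dimension count, using Proposition \ref{prop:noexteriormc} to get $\dim(\M^\sige(n)+\Mbar^\sige(n))=\dim\M^\sige(n)+\dim\Mbar^\sige(n)$ and citing \cite{Alvarez2014} for $\B^\sigi$, whereas you obtain spanning directly from the rearrangement $\Xbar{n,m}{\pm}=\Yt{n,m}{\sigma,\pm}+\beta_{n,m}\Y{n,m}{\sigma,\pm}$, which is a clean alternative that avoids the dimension table.

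There is, however, a concrete gap: spanning plus pairwise orthogonality yields a basis only if none of the listed functions is zero, and you never confront the monogenic constants, which is exactly where the interior and exterior cases differ. In $\B^\sigi$ one has $\Sc\X{n,n+1}{\pm}=0$, hence $\Xbar{n,n+1}{\pm}=-\X{n,n+1}{\pm}$ and $\beta_{n,n+1}=-1$, so $\Yt{n,n+1}{\sigi,\pm}\equiv0$ (and for $n=0$ all the $\Yt{0,m}{\sigi,\pm}$ vanish); the family as you index it is then not linearly independent, and the basis claim only holds after these degenerate members are discarded, consistently with $\dim(\M^\sigi(n)+\Mbar^\sigi(n))=4n+4$ rather than $4n+6$. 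Dually, in $\B^\sige$ you must check that no $\Yt{n,m}{\sige,\pm}$ vanishes: here $\beta_{n,m}=-1$ would force $m=-n-1$, which lies outside $I_\M^\sige(n)$, and since both $\Sc\X{n,m}{\pm}=(n+m+1)\U{n,m}{\pm}$ and $\Vec\X{n,m}{\pm}$ are nonzero in the admissible range, $\Xbar{n,m}{\pm}$ is never a multiple of $\X{n,m}{\pm}$; this is the role played in the paper by the absence of exterior monogenic constants (Proposition \ref{prop:noexteriormc}). With that nondegeneracy point added, your argument is complete.
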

 
\begin{proof} The proof for $\B^\sigi$ was given in \cite{Alvarez2014}, so
  we consider $\B^\sige$.  The orthogonality is again verified by
  a tedious calculation. Since the space of monogenic constants
  is trivial, the dimensions of the spaces of homogeneous exterior
  ambigenics are given by
\begin{align*}
    \dim(\M^\sige(n)+\Mbar^\sige(n))=\dim\M^\sige(n)+\dim\Mbar^\sige(n).
\end{align*}
An unavoidable complication is that not quite all antimonogenics are
orthogonal to the monogenics. More precisely, the inner products on
$\B^\sige$ of basic monogenics with basic antimonogenics are given by
\begin{align*}
    \langle \Xbar{n_1,m_1}{+}, \X{n_2,m_2}{-}\rangle_\sige &= \langle \Xbar{n_1,m_1}{-},\X{n_2,m_2}{+}\rangle_\sige = 0 \;\,  \mbox{ when }  n_1 \neq n_2
 \mbox{ or }  m_1 \neq m_2 ,
\end{align*}
while 
\begin{align*}
    \langle \Xbar{n_1,m_1}{+}, \X{n_2,m_2}{+}\rangle_\sige &= \langle \Xbar{n_1,m_1}{-}, \X{n_2,m_2}{-}\rangle_\sige \\
    &=  (\|\Sc\X{n_1,m_1}{\pm}\|^2_\sige -
    \|\Vec\X{n_1,m_1}{\pm}\|^2_\sige )\,\delta_{n_1,n_2}\delta_{m_1,m_2},
\end{align*}
 where we write
\begin{align}
    \Vec f= f - \Sc f = f_1e_1+f_2e_2.
\end{align}
  
Straightforward calculations based on Proposition \ref{prop:normU}
  show that in particular
\begin{align*}
    \langle \Xbar{n,m}{+},\X{n,m}{+}\rangle_\sige
    =  \dfrac{2\pi(n-2m^2+1)(1+\delta_{0,m})(-(n-m+2))!}{(2n+3)(2n+1)(-(n+m+2))!} .
\end{align*}
The orthogonality claimed for the $\Y{n,m}{\sigma,\pm}$,
  $\Yt{n,m}{\sigma,\pm}$ follows readily from this..
\end{proof}
        
We will not use the basis
$\{\Y{n,m}{\sigma,\pm},\ \Yt{n,m}{\sigma,\pm}\}$ in the following. It
has been included to enable explicit computational decompositions in
$(\Har^\sigma)^3$.  For use in the next
section, we note that direct calculations also show that
\begin{align} \label{eq:normvecX}
 \|\Vec\X{n,m}{\pm}\|_\sigma^2 = 
  \dfrac{2\pi(n^2+m^2+n) (\delta_{0,m}+1) (\vert n+3/2\vert+m-1/2)!}{(2n+3)(2n+1)(\vert n+3/2\vert-m-1/2)!}.
\end{align}

\subsection{Contragenic functions\label{sec:contragenic}}

It was observed in \cite{Alvarez2014} that, contrary to a basic fact of
complex numbers or indeed of any full Clifford algebra, not every harmonic
function is ambigenic in the sense of $\A$-valued functions that we
have given here. This  is related to the fact that $\A$ is not
closed under quaternionic multiplication.
Since the three real component functions of an ambigenic
function are harmonic, we can regard it as an element of $(\Har(n))^3$.
One says that $f$ is \textit{contragenic} when it is orthogonal to all
ambigenic functions in the domain of definition under consideration.
We have the spaces of homogeneous functions of degree $n$,
\begin{align}
  \N^\sigma(n)= (\M^\sigma(n)+\Mbar^\sigma(n))^\perp\subseteq
  (\Har^\sigma(n))^3 ,
\end{align}
with respect to the inner product \eqref{eq:scalarproduct} as
always. Since constant functions are ambigenic, the only contragenic
function of degree 1 in $\B^i$ is identically zero, so we do not speak
of $\N^\sigi(0)$.

In the light of the comment following the proof of Proposition
\ref{prop:basisX}, $\Sc f$ ranges over all of $\Har^\sigi$ as $f$
ranges over $\M^\sigi$, which in the light of
\eqref{eq:scalarproduct} implies that \textit{the scalar part of every
  contragenic function in $\B^i$ is zero}.  In general, a given
harmonic function $g_1e_1+g_2e_2$ with vanishing scalar part is
contragenic if and only if it is orthogonal in
$L^2(\{0\}\oplus(\Har^\sigi)^2)$ to the subspace
\begin{align*}
  \Vec\M^\sigi = \{\Vec f\colon\ f\in \M^\sigi \}=\Vec\Mbar^\sigi .
\end{align*} 

In $\B^\sige$, the situation is somewhat different. 

\begin{proposition} \label{prop:scalarcontragenics}
  Let $f$ be a contragenic function in $\B^\sige$. Then $\Sc f$
  lies in the subspace of $\Har^\sige$ generated by the
  particular exterior harmonics $\U{n,-n-1}{\pm}$, $n\in I_\HH^\sige$.
\end{proposition}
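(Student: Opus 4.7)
The plan is to test the contragenic function $f$ against a special family of ambigenic test functions, namely the scalar parts of basic exterior monogenics, and then compare the resulting orthogonality relations with the explicit description of $\Sc\M^\sige$ that comes from formula \eqref{eq:solidspmonogenic}.

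First, for any $g\in\M^\sige$ observe that $\Sc g=(g+\overline{g})/2$ lies in $\M^\sige+\Mbar^\sige$, so it is ambigenic. Since $f$ is contragenic, $\langle f,\Sc g\rangle_\sige=0$. Because $\Sc g$ has vanishing $e_1$- and $e_2$-components, the inner product \eqref{eq:scalarproduct} collapses to $\int_{\B^\sige}(\Sc f)(\Sc g)\,dV=0$. Thus $\Sc f$, viewed as an element of $\Har^\sige$, is orthogonal to every element of the real subspace $\Sc\M^\sige:=\{\Sc g\colon g\in\M^\sige\}$.

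Next, I would describe $\Sc\M^\sige$ explicitly. By Proposition~\ref{prop:basisX}, $\M^\sige(n)$ is spanned by $\{\X{n,m}{\pm}\colon m\in I_\M^\sige(n)\}$ with $I_\M^\sige(n)=\{0,1,\ldots,-n-2\}$, and \eqref{eq:solidspmonogenic} yields $\Sc\X{n,m}{\pm}=(n+m+1)\U{n,m}{\pm}$. For $n\le -2$ and $m$ in this range, the factor $n+m+1$ takes values in $\{n+1,\ldots,-1\}$ and is therefore never zero. Hence $\Sc\M^\sige(n)$ coincides with $\mathrm{span}\{\U{n,m}{\pm}\colon 0\le m\le -n-2\}$. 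Comparing this with the full orthogonal basis $\{\U{n,m}{\pm}\colon 0\le m\le -n-1\}$ of $\Har^\sige(n)$ from Section~\ref{sec:harmonic}, the $\Har^\sige(n)$-orthogonal complement of $\Sc\M^\sige(n)$ is precisely the two-dimensional subspace spanned by $\U{n,-n-1}{+}$ and $\U{n,-n-1}{-}$. Summing over $n\in J^\sige$ places $\Sc f$ in the closed span of $\{\U{n,-n-1}{\pm}\colon n\in J^\sige\}$, as asserted.

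The argument is elementary once the correct test functions are identified; the main point requiring attention is the verification that the scalar factor $n+m+1$ in $\Sc\X{n,m}{\pm}$ is nonvanishing throughout the index range $I_\M^\sige(n)$, including the edge case $n=-2$, $m=0$ where $n+m+1=-1$. This prevents any harmonic $\U{n,m}{\pm}$ with $m\le -n-2$ from accidentally slipping into the orthogonal complement, so that the latter is generated exactly by the extremal harmonics $\U{n,-n-1}{\pm}$.
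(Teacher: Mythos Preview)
Your proof is correct and follows essentially the same approach as the paper's: both arguments test $f$ against the purely scalar ambigenic functions $\Sc g=(g+\overline g)/2$ to force $\Sc f\perp\Sc\M^\sige$, then use the explicit identification of $\Sc\M^\sige(n)$ via \eqref{eq:solidspmonogenic} to conclude that only the extremal harmonics $\U{n,-n-1}{\pm}$ survive in the orthogonal complement. Your write-up is in fact slightly more careful than the paper's, in that you explicitly verify the nonvanishing of the factor $n+m+1$ over the index range $I_\M^\sige(n)$.
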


\begin{proof}
  Write $\Sc f = \sum a_{n,m}^\pm \U{n,m}{\pm}$ with indices ranging
  over $n\in J^\sige$, $m\in I_\HH^\sige(n)$. For every term in
  this sum for which $m\le -n-2$, take a monogenic function
  $f_{n,m}^\pm$ whose scalar part is $\U{n,m}{\pm}$.  Thus
  $\U{n,m}{\pm} + 0e_1+0e_2=(1/2)(f_{n,m}^\pm+\overline{f_{n,m}^\pm})$
  is ambigenic whenever $m\not=-1$. For such values of $(n,m)$, by
  definition of contragenic,
  \begin{align*}
    0 =\langle f, f_{n,m}^\pm \rangle =
    \int_{\B^\sige} (\Sc f) \, U_{n,m}^\pm \, dV.
  \end{align*}
  Thus $\Sc f$ is orthogonal to all of the basic harmonics
  except $\U{n,-n-1}{\pm}$.
\end{proof}

 We will use the index sets  $I_\N^\sigma(n)$ given by 
\begin{align*}
  I_\N^\sigma(n) &= \{m\colon\ 0\le m\le \vert n-1\vert\} \quad (n\ge 1 \mbox{ or } n\leq-2)
\end{align*}
 
\begin{definition} \label{def:basiccontra} Let
  $n\in J^\sigma\setminus\{0\}$. The \textit{basic contragenic
    functions} are the following: for $m\in I_\N^\sigma$, the
  vector-valued contragenics
\begin{align*} 
 \ZZ{n,0}{+}&=\U{n,1}{-}e_1-\U{n,1}{+}e_2,\nonumber\\  
  \ZZ{n,m}{\pm}&=\big(\U{n,m+1}{\mp}+\alpha_{n+1,-m}\U{n,m-1}{\mp}\big)e_1\mp\big(\U{n,m+1}{\pm}-\alpha_{n+1,-m}\U{n,m-1}{\pm}\big)e_2 
\end{align*}
where $1\leq m\leq \vert n-1/2\vert-1/2$, while for $\sigma=\sige$, additionally the scalar contragenics
\begin{align*} \label{eq:basiccontragenics}
  \ZZ{n,-n+1}{\pm}&=\U{n,-n-1}{\pm}.  
\end{align*}
\end{definition}

In parallel with the other notational conventions, there is no
``$\ZZ{n,0}{-}$''.
It is perhaps surprising that the coefficient $\alpha_{n+1,-m}$
works for both cases $\sigma=\sigi$ and  $\sigma=\sige$. 

\begin{center}
\begin{table}[h!]
    \[
\begin{array}{|l|l|}    
\hline
 \s\ZZ{-2,0}{+} & -\frac{x_2}{(x_0^2+x_1^2+x_2^2)^{3/2}} e_1 + \frac{x_1}{(x_0^2+x_1^2+x_2^2)^{3/2}}e_2 \\[.5ex]\hline
 \s\ZZ{-2,1}{+} &  \frac{6 x_0}{(x_0^2+x_1^2+x_2^2)^{3/2}}e_2 \\ 
 \s\ZZ{-2,1}{-} & \frac{6 x_0}{(x_0^2+x_1^2+x_2^2)^{3/2}}e_1\\[.5ex]\hline  \s\ZZ{-2,2}{+} &   -\frac{12 x_2}{(x_0^2+x_1^2+x_2^2)^{3/2}}e_1 -\frac{12 x_1}{(x_0^2+x_1^2+x_2^2)^{3/2}} e_2\\ 
 \s\ZZ{-2,2}{-} & -\frac{12 x_1}{(x_0^2+x_1^2+x_2^2)^{3/2}}e_1 +\frac{12 x_2}{(x_0^2+x_1^2+x_2^2)^{3/2}}e_2 \\[.5ex]\hline \s\ZZ{-2,3}{+} &  -\frac{x_1}{(x_0^2+x_1^2+x_2^2)^{3/2}} \\ 
  \s\ZZ{-2,3}{-} &  -\frac{x_2}{(x_0^2+x_1^2+x_2^2)^{3/2}}\\[.5ex]\hline\hline
  \s\ZZ{-3,0}{+} &  -\frac{3 x_0 x_2}{(x_0^2+x_1^2+x_2^2)^{5/2}}e_1+\frac{3 x_0 x_1}{(x_0^2+x_1^2+x_2^2)^{5/2}}e_2\\\hline
  \s\ZZ{-3,1}{+} &  \frac{6 x_1 x_2}{(x_0^2+x_1^2+x_2^2)^{5/2}}e_1+\big(\frac{6 (2 x_0^2-x_1^2-x_2^2)}{(x_0^2+x_1^2+x_2^2)^{5/2}}-\frac{3 (x_1-x_2) (x_1+x_2)}{(x_0^2+x_1^2+x_2^2)^{5/2}}\big)e_2 \\ 
  \s\ZZ{-3,1}{-} &  \big(\frac{3 (x_1-x_2) (x_1+x_2)}{(x_0^2+x_1^2+x_2^2)^{5/2}}+\frac{6 (2 x_0^2-x_1^2-x_2^2)}{(x_0^2+x_1^2+x_2^2)^{5/2}}\big)e_1+\frac{6 x_1 x_2}{(x_0^2+x_1^2+x_2^2)^{5/2}}e_2 \\[.5ex]\hline
 \s\ZZ{-3,2}{+} &  -\frac{60 x_0 x_2}{(x_0^2+x_1^2+x_2^2)^{5/2}}e_1-\frac{60 x_0 x_1}{(x_0^2+x_1^2+x_2^2)^{5/2}}e_2\\ 
 \ZZ{-3,2}{-} & -\frac{60 x_0 x_1}{(x_0^2+x_1^2+x_2^2)^{5/2}}e_1+\frac{60 x_0 x_2}{(x_0^2+x_1^2+x_2^2)^{5/2}}e_2\\[.5ex]\hline
  \s\ZZ{-3,3}{+} & \frac{180 x_1 x_2}{(x_0^2+x_1^2+x_2^2)^{5/2}}e_1+\frac{90 (x_1-x_2) (x_1+x_2)}{(x_0^2+x_1^2+x_2^2)^{5/2}}e_2\\ 
  \s\ZZ{-3,3}{-} &  \frac{90 (x_1-x_2) (x_1+x_2)}{(x_0^2+x_1^2+x_2^2)^{5/2}}e_1-\frac{180 x_1 x_2}{(x_0^2+x_1^2+x_2^2)^{5/2}}e_2\\[.5ex]\hline
 \s\ZZ{-3,4}{+} &  \frac{3 (x_1-x_2) (x_1+x_2)}{(x_0^2+x_1^2+x_2^2)^{5/2}} \\ 
 \s\ZZ{-3,4}{-} &  \frac{6 x_1 x_2}{(x_0^2+x_1^2+x_2^2)^{5/2}}\\[.5ex]\hline  
\end{array} \]
\caption{External spherical contragenic basis functions $\ZZ{n,m}{\pm}$ of degrees of homogeneity $n=-2,-3$.}
  \label{tab:firstcontragenics1}
\end{table}
\end{center}

\begin{theorem} \label{th:contragenicbasis}
  Let $n\in J^\sigma\setminus\{0\}$.   The basic contragenic functions
  given in Definition \ref{def:basiccontra} form an orthogonal basis for
    $\N^\sigma(n)$. 
\end{theorem}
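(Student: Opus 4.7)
The plan is to verify four items for the family in Definition \ref{def:basiccontra}: (i) each $\ZZ{n,m}{\pm}$ lies in $(\Har^\sigma(n))^3$; (ii) each is orthogonal in $L^2(\B^\sigma)$ to every element of $\M^\sigma(n)+\Mbar^\sigma(n)$; (iii) the family is mutually orthogonal, hence linearly independent; (iv) its cardinality equals $\dim\N^\sigma(n)$. Items (i) and (iii) are immediate. For (i), each coordinate of $\ZZ{n,m}{\pm}$ is by construction a linear combination of basic solid harmonics $\U{n,m'}{\pm}$ of the common degree $n$. For (iii), pairwise orthogonality reduces via the componentwise product \eqref{eq:scalarproduct} to Proposition \ref{prop:normU}: vector-valued basics $\ZZ{n,m}{\pm}$ with distinct $m$ involve disjoint sets of $\U{n,m'}{\pm}$; the ``$+$'' and ``$-$'' variants for the same $m$ carry complementary parity patterns on $e_1$ and $e_2$; and the scalar contragenics $\ZZ{n,-n+1}{\pm}=\U{n,-n-1}{\pm}$ share no component with the vector-valued family and differ between themselves only by the factor $\Phi^\pm_{-n-1}$.

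The substance is item (ii). Using $\overline{\X{n,m'}{\pm}}=\Sc\X{n,m'}{\pm}-\Vec\X{n,m'}{\pm}$ together with \eqref{eq:scalarproduct}, orthogonality of $\ZZ{n,m}{\pm}$ to both $\X{n,m'}{\pm}$ and $\Xbar{n,m'}{\pm}$ is equivalent to the separate vanishing of $\langle\Sc\ZZ{n,m}{\pm},\Sc\X{n,m'}{\pm}\rangle_\sigma$ and $\langle\Vec\ZZ{n,m}{\pm},\Vec\X{n,m'}{\pm}\rangle_\sigma$. Cross-degree products vanish by Proposition \ref{prop:normU}, so one fixes the common degree $n$. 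For the vector-valued contragenics ($0\le m\le\vert n-1/2\vert-1/2$), $\Sc\ZZ{n,m}{\pm}=0$ and only the vector-vector check remains. Substituting the expansion \eqref{eq:solidspmonogenic} of $\X{n,m'}{\pm}$, one finds that when the $\pm$-signatures of $Z$ and $X$ coincide each $e_j$-product pairs a ``$+$''-harmonic with a ``$-$''-harmonic and hence vanishes by Proposition \ref{prop:normU}; when they disagree, the nonzero contributions from $e_1$ and $e_2$ cancel one another. The off-diagonal terms ($\vert m-m'\vert=2$) cancel because $\|\U{n,m\pm1}{+}\|_\sigma=\|\U{n,m\pm1}{-}\|_\sigma$ on those indices, while the diagonal ($m=m'$) cancellation requires the specific value of $\alpha_{n+1,-m}$, chosen so that the combination of $\|\U{n,m+1}{\pm}\|_\sigma^2$ and $\|\U{n,m-1}{\pm}\|_\sigma^2$ computed from Proposition \ref{prop:normU} vanishes. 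Boundary indices (where $\U{n,0}{-}=0$, $\U{n,-1}{\pm}=\mp(1/n(n+1))\U{n,1}{\pm}$, or $\U{n,n+1}{\pm}=0$) are handled by substituting these identities and reusing the same algebraic cancellation. For the scalar exterior contragenics $\ZZ{n,-n+1}{\pm}=\U{n,-n-1}{\pm}$, the vector-vector check is trivial, and by \eqref{eq:solidspmonogenic} the scalar-scalar check reduces to orthogonality of $\U{n,-n-1}{\pm}$ against $\U{n,m'}{\pm}$ for $m'\in I_\M^\sige(n)=\{0,\dots,-n-2\}$, which holds by Proposition \ref{prop:normU} because $-n-1$ lies outside this index set.

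For item (iv), in the interior $\dim\N^\sigi(n)=\dim(\Har^\sigi(n))^3-\dim(\M^\sigi(n)+\Mbar^\sigi(n))=(6n+3)-(4n+4)=2n-1$, using Proposition \ref{prop:basisX} together with the two-dimensional space of interior monogenic constants of degree $n\ge 1$; this matches the count $1+2(n-1)$ of basic interior contragenics. In the exterior, Proposition \ref{prop:noexteriormc} gives $\dim(\M^\sige(n)+\Mbar^\sige(n))=2(-2n-3)=-4n-6$, and with $\dim(\Har^\sige(n))^3=-6n-3$ one obtains $\dim\N^\sige(n)=-2n+3$, matching $1+2(-n)+2$ basic exterior contragenics (the final $2$ accounting for the scalar contragenics). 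The main obstacle is the bookkeeping in item (ii) at the boundary indices, together with the verification that the single coefficient $\alpha_{n+1,-m}$ produces the required cancellation simultaneously for $\sigma=\sigi$ and $\sigma=\sige$; this dual compatibility (the surprise noted after Definition \ref{def:basiccontra}) ultimately traces to the fact that Proposition \ref{prop:normU} packages both regimes into a single expression in $\vert n+1/2\vert$, so the same algebraic identity in the factorials settles the cancellation in either setting.
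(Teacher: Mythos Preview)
Your proposal is correct and follows essentially the same strategy as the paper's proof: verify that the basic contragenics are mutually orthogonal and orthogonal to all $\X{n,m'}{\pm}$ and $\Xbar{n,m'}{\pm}$ by reducing everything to the orthogonality relations of Proposition~\ref{prop:normU}, handle the new scalar contragenics $\ZZ{n,-n+1}{\pm}=\U{n,-n-1}{\pm}$ by noting that $-n-1\notin I_\M^\sige(n)$, and finish with the same dimension count (Table~\ref{tab:dimensions}). The paper is terser---it cites \cite{Alvarez2014} for $\B^\sigi$ and calls the exterior verification ``a similar calculation''---whereas you spell out more of the parity/cancellation mechanism and the boundary-index bookkeeping, but the underlying argument is identical.
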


\begin{proof}
  The statement for $\B^\sigi$ was given in \cite{Alvarez2014}. For
  $\B^\sige$, the orthogonality of the $\ZZ{n,m}{\pm}$ with respect to
  each other and with respect to the basic monogenic functions and
  their conjugates is proved by a similar calculation; the only
  new idea is that harmonics mentioned in Proposition
  \ref{prop:scalarcontragenics} are in fact contragenic, because
  \begin{align*}
    &\langle \U{n,-n-1}{\pm} , \X{n',m'}{\pm} \rangle_\sige \\
    &= (n'+m'+1) \langle \U{n,-n-1}{\pm} , \U{n',m'}{\pm} \rangle_\sige
    + \langle 0e_1, [\X{n',m'}{\pm}]_1e_1\rangle_\sige
    + \langle 0e_2, [\X{n',m'}{\pm}]_2e_2\rangle_\sige \\
    &= 0,
  \end{align*}
by orthogonality of the solid spherical harmonics.

  Then a
  dimension count shows that they form a basis. Specifically, we
  summarize the dimensions over $\R$ of the relevant spaces of
  homogeneous functions in Table \ref{tab:dimensions}. The values for
  $\B^\sigi$, i.e., for $n\ge0$, are found in \cite{Alvarez2013}.  In
  this table, the dimension of $\M^\sigma(n)+\Mbar^\sigma(n)$ is twice
  the dimension of $\M^\sigma(n)$ less the dimension of
  $\M^\sigma(n)\cap\Mbar^\sigma(n)$. The dimensions
  $\dim(\M^\sigma(n)+\Mbar^\sigma(n))$ and $\dim \M^\sigma(n)$ sum to
  $\dim (\Har^\sigma(n))^3$, which implies that
   the basic contragenics are indeed bases.
\end{proof}

 \begin{table}[th!]
\[ \begin{array}{|c||c|c||c|}
\hline
 & n=0 & n\ge 1 &  n\le-3 \\\hline
  \s \Har^\sigma(n) &1 & 2n+1 & -(2n+1) \\[.5ex] 
  \s (\Har^\sigma(n))^3 &3 & 6n+3   & -(6n+3)  \\[.5ex] 
  \s \M^\sigma(n),\ \Mbar^\sigma(n) &3 & 2n+3   &-(2n+3) \\[.5ex] 
  \s  \M^\sigma(n)\cap\Mbar^\sigma(n) &3 &  2  & 0  \\[.5ex]
  \s  \M^\sigma(n)+\Mbar^\sigma(n) &3 & 4n+4  & -(4n+6)  \\[.5ex]
  \s  \N^\sigma(n)&0  & 2n-1 &  -(2n-3) \\[.5ex] \hline 
\end{array}
\]
\caption{Dimensions of spaces of homogeneous harmonic functions,
where $\sigma=\sigi$ for $n\ge0$ while $\sigma=\sige$ for $n\le-2$.}
  \label{tab:dimensions}
\end{table}

The respective direct sums of the finite dimensional spaces
$\Har^\sigma(n)$, $\M^\sigma(n)$, $\N^\sigma(n)$ over $n\in J^\sigma$
give the full Hilbert spaces $\Har^\sigma$, $\M^\sigma$, $\N^\sigma$.
The following fact, also verified by direct computation, is necessary
in order to compute coefficients of triples of harmonic functions with
respect to the basis
$\{\Y{n,m}{\sigma,\pm},\ \Yt{n,m}{\sigma,\pm},\ \ZZ{n,m}{\pm}\}$.

\begin{corollary}
  Let $n\in J^\sigma\setminus\{0\}$ and $m\in I_\N^\sigma$. The norms of
  the vector-valued basic contragenics are
\begin{align} \label{eq:Znorms}
 \| \ZZ{n,m}{\pm}\|_\sigma^2 = \dfrac{8\pi(n^2+m^2+n)(\delta_{0,m}+1)(\vert n-1/2\vert+m-1/2)!}{(2n+3)(2n+1)(\vert n-1/2\vert-m-1/2)!},
\end{align}
while for the scalar contragenics, the norms are
\begin{align*}
\|\ZZ{n,-n+1}{\pm}\|_\sige^2 = \dfrac{2\pi \, \Gamma(-2n-1)}{(2n+3)(2n+1)}.
\end{align*}
\end{corollary}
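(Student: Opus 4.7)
The strategy is to reduce $\|\ZZ{n,m}{\pm}\|_\sigma^2$ to a sum of squared norms of individual solid spherical harmonics $\U{n,k}{\pm}$ via the pairwise $L^2$-orthogonality of these harmonics (orthogonal both across the index $k$ and across the sign superscript), then apply Proposition \ref{prop:normU} termwise. This mirrors the calculation behind formula \eqref{eq:normvecX} for the vector parts of basic monogenics.

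The scalar contragenic case ($\sigma=\sige$) is immediate. Since $\ZZ{n,-n+1}{\pm} = \U{n,-n-1}{\pm}$, one substitutes $m = -n-1$ into Proposition \ref{prop:normU}. For $n \le -2$ this gives $\delta_{0,m} = 0$, $\vert n+1/2\vert = -n-1/2$, numerator factorial $(-2n-2)!$, and denominator $0! = 1$, producing $2\pi(-2n-2)!/((2n+3)(2n+1))$. Rewriting $(-2n-2)! = \Gamma(-2n-1)$ yields the stated formula.

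For the vector-valued basic contragenics, orthogonality decomposes the norm as
\begin{align*}
\|\ZZ{n,m}{\pm}\|_\sigma^2 = \|\U{n,m+1}{+}\|_\sigma^2 + \|\U{n,m+1}{-}\|_\sigma^2 + \alpha_{n+1,-m}^2\bigl(\|\U{n,m-1}{+}\|_\sigma^2 + \|\U{n,m-1}{-}\|_\sigma^2\bigr),
\end{align*}
under the convention $\U{n,0}{-} \equiv 0$. The factor $1+\delta_{0,k}$ appearing in Proposition \ref{prop:normU} compensates exactly for the missing $\U{n,0}{-}$ contribution whenever $m-1 = 0$, and the same identity also absorbs the exceptional case $m = 0$, where the explicit definition $\ZZ{n,0}{+} = \U{n,1}{-}e_1 - \U{n,1}{+}e_2$ leads directly to $\|\U{n,1}{+}\|_\sigma^2 + \|\U{n,1}{-}\|_\sigma^2$.

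The main obstacle is the algebraic simplification that follows. After inserting Proposition \ref{prop:normU} into each summand, one must show that the weighted sum of two factorial ratios, whose arguments are shifted by $\pm 1$ relative to the target, collapses via the prefactor $\alpha_{n+1,-m}^2 = (n-m+1)^2(n-m+2)^2$ into a single factorial ratio in $\vert n-1/2\vert$ with combined coefficient $n^2 + m^2 + n$ (multiplied by the appropriate numerical constants). Since $\vert n+1/2\vert$ equals $n+1/2$ on the interior branch $n \ge 1$ but $-n-1/2$ on the exterior branch $n \le -2$, the telescoping $(j+1)! = (j+1)\,j!$ runs in opposite orientations on the two branches; the gratifying outcome, which needs to be checked branch by branch, is that both produce the same closed form, so a single unified expression is valid for both $\sigma = \sigi$ and $\sigma = \sige$.
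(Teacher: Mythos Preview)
Your proposal is correct and follows exactly the approach the paper takes: the Corollary is stated there without a detailed argument, the text merely noting that it is ``also verified by direct computation,'' and your outline of that computation---orthogonal decomposition into solid spherical harmonics, termwise application of Proposition~\ref{prop:normU}, and branch-by-branch algebraic simplification---is precisely what that phrase means. Your treatment of the scalar contragenics and of the $\delta_{0,m}$ compensation is likewise in line with the paper's conventions.
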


 
\subsection{Duality relation}

The following duality relation is obtained by observation of
the explicit formulas for basic monogenics and contragenics.
We see no intrinsic reason to suppose beforehand that such a relation should hold. Recall
that $U_{n,m}^\pm$ are defined on both $\B^i$ and $\B^e$. For 
functions of the form $f=f_1e_1+f_2e_2$, we have the linear
involution
\begin{align} \label{eq:fstar}
  f^* = f_2e_1 + f_1e_2 = -e_1fe_2.
\end{align}

\begin{proposition} \label{prop:duality}
  Let $n\in J^\sigma\setminus\{0\}$, and let $m\in I_\N^\sigi(n)$
  when $n>0$; otherwise, let  $m\in I_\N^\sige(n+1)$.
  Then
\begin{align*}
\ZZ{n,0}{\pm} &= \mp \rho^{2n+1}(\X{-n-1,0}{\pm})^*, \\
\ZZ{n,m}{\pm} &= \mp2\rho^{2n+1}(\X{-n-1,m}{\pm})^*.
\end{align*}
\end{proposition}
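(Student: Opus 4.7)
The plan is direct verification by expanding both sides in the basis of solid spherical harmonics $\U{n,k}{\pm}$. The right-hand side involves the monogenic basis function $\X{-n-1,m}{\pm}$ of negative degree, given by formula \eqref{eq:solidspmonogenic} of Proposition \ref{prop:basisX} with $n$ replaced by $-n-1$, while the left-hand side is given directly by Definition \ref{def:basiccontra}. The central tool is the identity \eqref{eq:Uduality}, $\rho^{2n+1}\U{-n-1,k}{\pm}=\U{n,k}{\pm}$, which makes the two sides live on the same collection of basic harmonics up to scalar factors and a swap of the $e_1$- and $e_2$-components.

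First, I apply \eqref{eq:solidspmonogenic} with $n\mapsto-n-1$ to express $\Vec\X{-n-1,m}{\pm}$ as an explicit linear combination of $\U{-n-1,m+1}{\pm}$ and $\U{-n-1,m-1}{\pm}$, with coefficients $1$ and $\alpha_{-n-1,m}$. Next, I apply the involution $f\mapsto f^*=f_2e_1+f_1e_2$, which interchanges (up to signs) the $e_1$- and $e_2$-components of the vector part, and then multiply by $\mp2\rho^{2n+1}$. Distributing the scalar factor $\rho^{2n+1}$ inside and invoking \eqref{eq:Uduality} termwise converts every $\U{-n-1,k}{\pm}$ into $\U{n,k}{\pm}$, producing an expression of exactly the same shape as $\ZZ{n,m}{\pm}$ in Definition \ref{def:basiccontra}.

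The proof is then completed by comparing coefficients term by term. The nontrivial algebraic point is reconciling the factor $\alpha_{-n-1,m}$ arising on the monogenic side with the factor $\alpha_{n+1,-m}$ appearing in the definition of $\ZZ{n,m}{\pm}$; this reduces to a short product-of-linear-factors calculation, possibly combined with \eqref{eq:Pminusm} to handle negative-$m$ instances of $\U{n,m-1}{\pm}$. The case $m=0$ is treated separately via the identity $\U{n,-1}{\pm}=\mp(1/n(n+1))\U{n,1}{\pm}$ noted after Proposition \ref{prop:basisX}, which collapses the general formula to the single-pair form of $\ZZ{n,0}{+}$. The main obstacle I expect is the careful bookkeeping at extreme values of $m$, where one of $\U{n,m+1}{\pm}$ or $\U{n,m-1}{\pm}$ either vanishes or must be reinterpreted through \eqref{eq:Pminusm}, and verifying that these degenerate cases match consistently on both sides of the duality.
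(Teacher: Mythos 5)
Your proposal is correct and follows essentially the same route as the paper's proof: a direct termwise verification using \eqref{eq:Uduality} and the explicit expansion \eqref{eq:solidspmonogenic} (with $n\mapsto -n-1$), with the crux being the coefficient identity $\alpha_{n+1,-m}=\alpha_{-n-1,m}$, which the paper simply observes and you reduce to the same short algebraic check. Your separate handling of the $m=0$ case via $\U{n,-1}{\pm}=\mp(1/n(n+1))\U{n,1}{\pm}$ is a reasonable way to account for the different constant in that line, consistent with the remark following Proposition \ref{prop:basisX}.
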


\begin{proof}
  Observe that $\alpha_{n+1,-m}=\alpha_{-n-1,m}$. Therefore by
  \eqref{eq:Uduality}, it follows that
  \begin{align*}
    \ZZ{n,m}{\pm} &=
    \vert x\vert^{2n+1}\Bigl(\big(\U{-n-1,m+1}{\mp}+\alpha_{-n-1,m}\U{-n-1,m-1}{\mp}\big)e_1 \\
    & \qquad \qquad \quad \mp\big(\U{-n-1,m+1}{\pm}-
     \alpha_{-n-1,m}\U{-n-1,m-1}{\pm}\big)e_2\Bigr) \\
    &= \mp2 \vert x\vert^{2n+1}\Big([\X{-n-1,m}{\pm}]_2e_1+[\X{-n-1,m}{\pm}]_1e_2\Big) 
  \end{align*}
  as required.
\end{proof}

The duality relation does not apply to the scalar-valued contragenics
$\ZZ{n,-n+1}{\pm}$.

The normalized elements of $\Vec\M$ and $\Vec\N$ will be written
as
\begin{align}
  \Vec\Xhat{n,m}{\pm} = \frac{\Vec \X{n,m}{\pm}}{\|\Vec \X{n,m}{\pm}\|_\sigma}, \quad
   \ZZhat{n,m}{\pm} = \frac{\ZZ{n,m}{\pm}}{\|\ZZ{n,m}{\pm}\|_\sigma}, 
\end{align}
using \eqref{eq:normvecX} and \eqref{eq:Znorms}, where as always
$\sigma=\sigi$ or $\sige$ according to whether $n\ge0$ or
$n\le-2$. Thus the vector parts of the normalized monogenics are
$\Vec\Xhat{n,m}{\pm}=[\Xhat{n,m}{\pm}]_1e_1+[\Xhat{n,m}{\pm}]_2e_2$,
where the components $[\Xhat{n,m}{\pm}]_1$, $[\Xhat{n,m}{\pm}]_2$ can
be read off from \eqref{eq:solidspmonogenic}. Similarly, by the
duality we have the expression in components of the normalized
vectorial contragenics
$\ZZhat{n,m}{\pm}=[\ZZhat{n,m}{\pm}]_1e_1+[\ZZhat{n,m}{\pm}]_2e_2$.


\section{Bergman reproducing kernels\label{sec:bergman}}

Bergman reproducing kernels were introduced in \cite{Bergman1950,BergmanSchiffer1953} and
currently enjoy a wide-reaching theory \cite{AN1950,DMP2021,FY2013,HZ2009,KT2022,MW2014,MSKS2020,Paulsen2016}. A Bergman
kernel for $\M^\sigi$ was presented (in a different but equivalent
context) in \cite{GLS2010}. A Bergman reproducing kernel was given in
\cite{Alvarez2014} for $\Vec\M^\sigi$. In this section, we complete
this topic by giving a Bergman kernel for $\Vec\M^\sige$ as well as
for both cases of $\N^\sigma$.  Since from
now on we are interested exclusively in vector valued functions, we write
\begin{align*}
  \A_2=\R e_1+\R e_2\subseteq\A,
\end{align*}
with the natural restriction of the scalar-valued inner product.

For clarity we outline the general construction in this context.
  Given an
orthonormal basis $\{\psi_k\}_{k=1}^\infty$ of a closed subspace $E$
of the $\R$-linear space $L^2(\Omega,\A_2)$, say
$\psi_k=\psi_{1,k}e_1+ \psi_{2,k}e_2$, consider the matrix of
functions
\begin{align} \label{eq:bmatrix}
 \BB(x,y) = \begin{pmatrix}
  \sum_k\psi_{1,k}(x)\psi_{1,k}(y) & \sum_k\psi_{1,k}(x)\psi_{2,k}(y) \\[1ex]  \sum_k\psi_{2,k}(x)\psi_{1,k}(y) & \sum_k\psi_{2,k}(x)\psi_{2,k}(y) 
 \end{pmatrix} ,
\end{align}
which satisfies the symmetry relation $\BB(y,x)=\BB(x,y)^t$ with
$(\ )^t$ denoting the transpose.  The quadratic form determined by
this matrix is used to define a linear operator
$\BB\colon L^2(\Omega,\A_2) \to E$ as
follows.  For $f=f_1e_1+f_2e_2\in L^2(\Omega,\A_2)$, write
\begin{align} \label{eq:bergmangeneral}
 \BB[f](x) = \int_\Omega \begin{pmatrix} e_1 \\ e_2 \end{pmatrix}^t
 \BB(x,y)  \begin{pmatrix} f_1(y) \\ f_2(y) \end{pmatrix} d V_y.
\end{align}
  Then the operator $\BB$
satisfies the reproducing property
\begin{align}
   \BB[f](x) = f(x)
\end{align}
for every $x\in\Omega$. The verification is obtained from the basis
representation $f=\sum_{k=1}^\infty a_k\psi_k$ ($a_k\in\R$),
and its main step is the observation that
 \begin{align}
  \BB[f](x) &=  \int_\Omega\bigg(   
   \sum_k  \psi_{1,k}(x)\psi_{1,k}(y)\sum_la_l\psi_{1,l}(y) \nonumber \\
   &\qquad \qquad +
   \sum_k  \psi_{1,k}(x)\psi_{2,k}(y)\sum_la_l\psi_{2,l}(y)
    \bigg) e_1 \, dV_y  \nonumber \\
  &\ +  \int_\Omega\bigg(
   \sum_k \psi_{2,k}(x) \psi_{1,k}(y)\sum_la_l\psi_{1,l}(y) \nonumber \\
   &\qquad \qquad +
   \sum_k \psi_{2,k}(x) \psi_{2,k}(y)\sum_la_l\psi_{2,l}(y)
      \bigg) e_2 \, dV_y,    \label{eq:Bfcalc}
\end{align}
in which one factors out $\psi_{1,k}(x)$ and $\psi_{2,k}(x)$ and then applies the orthogonality relation
\begin{align*}
  \int_\Omega( \psi_{1,k}(y)\psi_{1,l}(y) + \psi_{2,k}(y)\psi_{2,l}(y) ) \, dV_y = \delta_{k,l} 
\end{align*}
after interchanging the order of integration and summation.  This
discussion depends on the convergence of the series, which will be
discussed below in particular cases. Given the convergence, it is seen
that the kernel $\BB(x,y)$ remains unchanged when one uses any other
orthonormal basis in place of $\{\psi_k\}$.


\subsection{Bergman kernel\label{sec:bergmanvecM} for $\Vec\M^\sigma$}
Elements of $\Vec\M^\sigma$ are characterized \cite{Alvarez2013} as those harmonic
functions $f=f_1e_1+f_2e_2$ of three variables satisfying the
differential equation 
\begin{align} \label{eq:VecMequation}
  \frac{\partial}{\partial x_2}f_1 =
  \frac{\partial}{\partial x_1}f_2 .
\end{align}
We take for $\{\psi_k\}$ the orthonormal basis 
\begin{align*}
  \{\Vec\Xhat{n,m}{\pm} \colon\ n\in J^\sigma,\ m\in I_\M(n)\}
\end{align*}
of $\Vec\M^\sigma(n)$ in $\B^\sigma$, with the understanding that
$\Vec\Xhat{n,0}{\sigi,+}$ is not included because they vanish.
(However, $\Vec\Xhat{0,1}{\pm}$ must be included.) Similarly,
$\Vec\Xhat{-2,0}{-} = 0$. The associated Bergman operator
\eqref{eq:bergmangeneral} can be described as follows.  Define the
functions $b_{\M,1}^\sigma$, $b_{\M,2}^\sigma$ for
$x,y\in\B^\sigma$, which take values in $\A_2$:
\begin{align}
  b_{\M,j}^\sigma(x,y) &= \sum_{n\in J^\sigma}
   \sum_{m\in I_\M^\sigma(n)} \big(
          [\Xhat{n,m}{+,\sigma}]_j(x)\,\Vec\Xhat{n,m}{+}(y) \nonumber \\[-1.5ex]
   &\qquad \qquad \qquad \qquad + 
          [\Xhat{n,m}{-,\sigma}]_j(x)\,\Vec\Xhat{n,m}{-}(y)\big)
   \nonumber \\[2ex]
   &= \sum_{n\in J^\sigma}
   \sum_{m\in I_\M^\sigma(n)} \big(
          [\Xhat{n,m}{+,\sigma}]_j(y)\,\Vec\Xhat{n,m}{+}(x) \nonumber \\[-1.5ex]
   &\qquad \qquad \qquad \qquad + 
          [\Xhat{n,m}{-,\sigma}]_j(y)\,\Vec\Xhat{n,m}{-}(x)\big)
      \label{eq:bergmanopmonog}
\end{align}
($j=1,2$).  The convergence of the series can be verified using
Proposition \ref{prop:normX}, but it follows more easily by general
results used in the theory of Bergman kernels. This approach uses
Parseval's formula applied to an arbitrary $L^2$ orthogonal basis,
combined with the well-known boundedness of the evaluation functional
\begin{align*}
  \vert f(x)\vert \le C_\delta\|f\|_2,
\end{align*}
where $C_\delta$ is a constant determined by the distance $\delta$
of $x$ to the boundary of the domain.
The Bergman operator $\BB_\M^\sigma$ for $\Vec\M^\sigma$ can be
expressed as
\begin{align} \label{eq:bergmanop}
    \BB_\M^\sigma[f](x) &=
    \left(  \int_{\B^\sigma} \Sc(b_{\M,1}^\sigma(x,y)f(y)) \,dV_y\right)e_1 \nonumber \\
    &\quad +
    \left( \int_{\B^\sigma} \Sc(b_{\M,2}^\sigma(x,y)f(y)) \,dV_y\right)e_2.
\end{align}
Note that each integrand contains a multiplication involving $e_1$ and
$e_2$ before taking the scalar part.  From the definition it is
immediate that $b_{\M,j}^\sigma(x,y)$ are elements of $\Vec\M^\sigma$
as functions of $x$, and then by applying the criterion
\eqref{eq:VecMequation} to the representation \eqref{eq:Bfcalc} via
differentiation across the integral sign, it is seen that
$\BB_\M^\sigma[f]\in\Vec\M^\sigma$.

 \begin{proposition} $\BB_\M^\sigma$ projects the Hilbert space
   $L^2(\B^\sigma,\A_2)$ orthogonally onto the subspace
   $\Vec\M^\sigma$. Thus
  \begin{align*}
    \BB_\M^\sigma[f]=f
  \end{align*}
  if and only if $f\in\Vec\M^\sigma$. If $f$ is harmonic, then $\BB_\M^\sigma[f]=0$ if and only if
  $f\in\N^\sigma$.
\end{proposition}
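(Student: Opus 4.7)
The plan is to identify $\BB_\M^\sigma$ with the orthogonal projection of $L^2(\B^\sigma,\A_2)$ onto the closed subspace $\Vec\M^\sigma$. Once that identification is in hand, all three assertions follow from standard Hilbert space generalities together with the simple observation that $\A_2$-valued functions have vanishing scalar part.

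First I would revisit \eqref{eq:Bfcalc}. That rearrangement was performed there only for an $f$ lying in the closed span of $\{\psi_k\}=\{\Vec\Xhat{n,m}{\pm}\}$, but applied to an arbitrary $f\in L^2(\B^\sigma,\A_2)$ it yields the cleaner formula
\begin{align*}
  \BB_\M^\sigma[f](x) \;=\; \sum_k \langle f,\psi_k\rangle_\sigma\, \psi_k(x),
\end{align*}
which is exactly the expansion of the orthogonal projection $P_{\Vec\M^\sigma}$. The only delicate point is the interchange of sum and integral, and I would handle it by the standard Bergman argument: the evaluation bound $|h(x)|\le C_\delta\|h\|_2$ (already invoked in the text to justify convergence of $b_{\M,j}^\sigma$) is equivalent, by Parseval, to local uniform convergence of $\sum_k|\psi_k(x)|^2$, which permits termwise integration against any $L^2$ density.

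Once $\BB_\M^\sigma=P_{\Vec\M^\sigma}$ is established, the first two claims are immediate: $P_{\Vec\M^\sigma}$ is an orthogonal projection with range $\Vec\M^\sigma$, and $P_{\Vec\M^\sigma}[f]=f$ if and only if $f$ lies in that range. For the third claim, let $f\in L^2(\B^\sigma,\A_2)$ be harmonic. Since $f$ is $\A_2$-valued we have $\Sc f=0$, so for every $g\in\M^\sigma$,
\begin{align*}
  \langle f,g\rangle_\sigma = \langle f,\Vec g\rangle_\sigma, \qquad
  \langle f,\overline{g}\rangle_\sigma = -\langle f,\Vec g\rangle_\sigma.
\end{align*}
Therefore $f\perp\Vec\M^\sigma$ is equivalent to $f\perp(\M^\sigma+\Mbar^\sigma)$, i.e., $f\in\N^\sigma$; combined with $\BB_\M^\sigma[f]=0\iff f\in(\Vec\M^\sigma)^\perp$, this yields the stated equivalence.

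The principal, and essentially only, technical obstacle is the termwise interchange of sum and integral sketched above. A minor point worth flagging in the exterior case is that the full contragenic space $\N^\sige$ contains the scalar-valued functions $\U{n,-n-1}{\pm}$, which are not elements of $L^2(\B^\sige,\A_2)$; the final equivalence must therefore be read with $\N^\sige$ intersected against $L^2(\B^\sige,\A_2)$, equivalently restricted to those exterior contragenics whose scalar part vanishes.
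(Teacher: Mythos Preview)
Your proposal is correct and follows essentially the same route as the paper, which simply invokes ``standard Hilbert space arguments based on the orthogonality of $\Vec\X{n,m}{\pm}$'' and refers to \cite{Alvarez2014} for the interior case. Your write-up is in fact more detailed than the paper's, particularly in spelling out the equivalence $f\perp\Vec\M^\sigma\Leftrightarrow f\perp(\M^\sigma+\Mbar^\sigma)$ for $\A_2$-valued $f$ and in flagging the caveat about the scalar exterior contragenics.
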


\begin{proof}
  The proof for $\B^\sigi$ follows by standard Hilbert space arguments
  based on the orthogonality of $\Vec\X{n,m}{\pm} \vert_{\B^\sige}$, and was
  given in \cite{Alvarez2014}. For $\B^\sige$ it follows by the same
  reasoning.
\end{proof}
  
By Proposition \ref{prop:duality} and the homogeneity of the basic
functions, the Bergman kernel for $\B^\sigma$ can be expressed in
terms of the basic contragenic functions. Note that because of \eqref{eq:fstar}, the 
indices $1$ and $2$ will be interchanged as indicated by the notation below:
\begin{corollary} Write $j'=3-j$, $j=1,2$. Then the terms in the Bergman
  reproducing kernel for $\Vec\M^\sigma$ are given by

  \begin{align*}
    b_j^\sigma(x,y)  = -\dfrac{1}{2}\sum_{n\in J^\sigma}
    \vert x\vert^{2n+1}\vert y\vert^{2n+1}  \sum_{m\in I_\N^\sigma(n)} 
  & \bigg(\bigl[\ZZhat{-n-1,m}{+,\sigma}\bigr]_{j'}(x)\,
     \ZZhat{-n-1,m}{+,\sigma}(y) \\
  & + \bigl[\ZZhat{-n-1,m}{-,\sigma}\bigr]_{j'}(x)\,
     \ZZhat{-n-1,m}{-,\sigma}(y)\bigg),
  \end{align*}  
\end{corollary}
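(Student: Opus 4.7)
The plan is to substitute the duality identity of Proposition \ref{prop:duality} term-by-term into the defining series \eqref{eq:bergmanopmonog} for $b_{\M,j}^\sigma(x,y)$, then collapse the normalization ratios using the explicit formulas \eqref{eq:normvecX} and \eqref{eq:Znorms}. First, I would rewrite Proposition \ref{prop:duality} in inverted form: applying the involution $*$ to both sides (and using $(f^*)^* = f$ on $\A_2$) gives
\[
  \Vec\X{n,m}{\pm} = \mp\, c_m^{-1}\, |x|^{2n+1}\, (\ZZ{-n-1,m}{\pm})^*,
\]
with $c_0 = 1$ and $c_m = 2$ for $m \geq 1$. Since $*$ interchanges the coefficients of $e_1$ and $e_2$, taking the $e_j$-component on the left corresponds to taking the $e_{j'}$-component on the right, which is the source of the index swap $j \mapsto j' = 3-j$ in the stated formula.

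Next, I would plug this into each summand $[\Xhat{n,m}{\pm}]_j(x)\,\Vec\Xhat{n,m}{\pm}(y) = [\Vec\X{n,m}{\pm}]_j(x)\,\Vec\X{n,m}{\pm}(y) / \|\Vec\X{n,m}{\pm}\|_\sigma^2$. The two factors of $\mp$ square to $+1$, so each term becomes
\[
  \frac{c_m^{-2}\,|x|^{2n+1}|y|^{2n+1}}{\|\Vec\X{n,m}{\pm}\|_\sigma^2}\, [\ZZ{-n-1,m}{\pm}]_{j'}(x)\,(\ZZ{-n-1,m}{\pm})^*(y).
\]
Normalizing $\ZZ{-n-1,m}{\pm} = \|\ZZ{-n-1,m}{\pm}\|\,\ZZhat{-n-1,m}{\pm}$ produces the ratio $\|\ZZ{-n-1,m}{\pm}\|^2 / \|\Vec\X{n,m}{\pm}\|_\sigma^2$, computed from \eqref{eq:normvecX} and from \eqref{eq:Znorms} after the substitution $n\mapsto -n-1$. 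Combined with the factor $c_m^{-2}$, this ratio must reduce to the constant $-1/2$ claimed in the Corollary, independently of $n$ and $m$, and summation over $\pm$, $m$, and $n$ then produces the asserted expression.

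The main obstacle is carrying out this last normalization reduction uniformly. The formulas \eqref{eq:normvecX} and \eqref{eq:Znorms} involve factorials of the absolute values $|n+3/2|$ and $|n-1/2|$, whose expansions change according to whether $n \geq 0$ or $n \leq -2$, so the factorial cancellation has to be checked separately in the regimes $\sigma = \sigi$ and $\sigma = \sige$. A secondary bookkeeping task is to reconcile the index ranges: \eqref{eq:bergmanopmonog} sums over $m \in I_\M^\sigma(n)$, which is strictly larger than the $m \in I_\N^\sigma(n)$ appearing in the Corollary, so one must verify that the residual summands — those corresponding to the monogenic constants $\X{n,n+1}{\pm}$ on the interior side and to the scalar contragenic indices on the exterior side — either vanish, lie outside $\Vec\M^\sigma$, or are absorbed into the formula by the reindexing $n \mapsto -n-1$.
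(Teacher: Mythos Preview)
Your strategy is exactly the one the paper intends: the Corollary is presented as an immediate consequence of Proposition \ref{prop:duality} and the homogeneity of the basic functions, with no further argument supplied. Your proposal spells out the substitution mechanism (invert the duality, swap $e_1\leftrightarrow e_2$ via $*$, pull out the radial factors, then renormalize) much more explicitly than the paper does, and correctly isolates the two bookkeeping tasks—the norm ratio and the index-range discrepancy $I_\M^\sigma(n)$ versus $I_\N^\sigma(n)$.

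There is, however, a genuine gap at the point you flag as the ``main obstacle''. If one actually computes the ratio using \eqref{eq:normvecX} and \eqref{eq:Znorms} with $n\mapsto -n-1$, one finds for $m\ge1$
\[
\frac{c_m^{-2}\,\|\ZZ{-n-1,m}{\pm}\|^2}{\|\Vec\X{n,m}{\pm}\|_\sigma^2}
=\frac{1}{4}\cdot 4\cdot\frac{2n+3}{2n-1}=\frac{2n+3}{2n-1},
\]
which is positive and $n$-dependent; it does not collapse to $-1/2$. (For instance $n=1$, $m=1$ gives $5$.) A second discrepancy is that your substitution produces $(\ZZ{-n-1,m}{\pm})^*(y)$ in the $y$-slot, whereas the displayed formula has $\ZZhat{-n-1,m}{\pm,\sigma}(y)$ without the involution; summing over the $\pm$ sign does not remove the $*$, and you do not address this. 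Both issues point to the stated Corollary being imprecise (in its constant, in the normalization encoded by the superscript $\sigma$ on $\widehat Z$, or in the missing $*$) rather than to your method being wrong. Your plan is the correct route to a dual expression for the kernel; the gap is that you assert the ratio ``must reduce'' to $-1/2$ without carrying out the check, and when one does, it fails under the natural reading of the norms.
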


Observe that $\Vec\ZZhat{-n-1,-n}{+,e}=0e_1+0e_2$ in this sum. We close this section with an observation on the orthogonal
decomposition
\begin{align*}
  L^2(\B^\sigma,\A_2) = \Vec\M^\sigma\oplus \N^\sigma \oplus
    \Har(\B^\sigma,\A_2)^\perp.
\end{align*}
The operators
\begin{align*}
  \P^\sigma = \BB_\M^\sigma + \BB_\N^\sigma,\quad
  \Q^\sigma = I - \P^\sigma
\end{align*}
are complementary orthogonal projectors on $L^2(\B^\sigma,\A_2)$,
\begin{align*}
  \P^\sigma&\colon L^2(\B^\sigma,\A_2)\to\Har(\B^\sigma,\A^2), \\
  \Q^\sigma&\colon L^2(\B^\sigma,\A_2)\to \Har(\B^\sigma,\A^2)^\perp.
\end{align*}
The relations $(\P^\sigma)^2=(\Q^\sigma)^2=I$,
$\P^\sigma\Q^\sigma=\Q^\sigma\P^\sigma=0$ are easily verified from the
corresponding properties for $ \BB_\M^\sigma$ and $\BB_\N^\sigma$.


\subsection{Bergman kernel\label{sec:bergmancont} for $\N^\sigma$}

Here we give the analogous construction for contragenic
functions, based on the orthogonal basis $\{Z_{n,m}^\pm\}$. Being an orthogonal complement, $\N^\sigma$ is a closed subspace of
$L^2(\B^\sigma, \A_2)$. We define
\begin{align}\label{eq:bergmankercontrag}
  b_{\N,j}^\sigma(x,y) &= (-)^\sigma\sum_{n\in J^\sigma}
   \sum_{m\in I_\N^\sigma(n)} \big(
          [\ZZhat{n,m}{+,\sigma}]_j(x)\,\ZZhat{n,m}{+,\sigma}(y)+ 
          [\ZZhat{n,m}{-,\sigma}]_j(x)\,\ZZhat{n,m}{-,\sigma}(y)\big)
\end{align} for $j=1,2$, where $(-)^\sigi=1$, $(-)^\sige=-1$. Then the Bergman operator $\BB_\N^\sigma$ for $\N^\sigma$ can be expressed as
\begin{align} \label{eq:bergmanopcontrag}
 \BB_\N^\sigma[f](x) =
  \left( \Sc \int_{\Omega}\,b_{\N,1}(x,y)f(y) \,dV_y\right)e_1 +
  \left( \Sc \int_{\Omega}\,b_{\N,2}(x,y)f(y) \,dV_y\right)e_2
\end{align}
for $x\in\B^\sigma$, $f\in L^2(\B^\sigma,\A_2)$. 

By construction, $b_{\N,j}^\sigma(x,y)$ is a contragenic function
of $x$ in $\B^\sigma$. To see that $\BB_\N^\sigma[f]$ is contragenic,
express \eqref{eq:Bfcalc} in the form
\begin{align*}
  \sum_k\int_\Omega(\psi_{1,k}e_1+\psi_{2,k}e_2) F_k(y) \, dV_y,
\end{align*}
where for our purposes the real function $F_k(y)$ is irrelevant,
and $\psi_{1,k}e_1+\psi_{2,k}e_2$ is contragenic. Now if
$g=g_1e_1+g_2e_2\in\Vec\M^\sigma$, then the inner product of
$\BB_\N^\sigma[f]$ with $g$ is
\begin{align*}
 \sum_k\int_\Omega\int_\Omega (g_1(x)\psi_1(x)+g_2(x)\psi_2(x))F_k(y)\, dV_y\, dV_x  
\end{align*}
which is seen to vanish after an application of Fubini's theorem
and the assumed contragenicity.

\begin{proposition}
  $\BB_\N^\sigma$ projects the Hilbert space
   $L^2(\B^\sigma,\R e_1+\R e_2)$ orthogonally onto the subspace
   $\N^\sigma$. Thus
  \begin{align*}
    \BB_\N^\sigma[f]=f
  \end{align*}
  if and only if $f\in\N^\sigma$. If $f$ is harmonic, then $\BB_\N^\sigma[f]=0$ if and only if
  $f\in\Vec\M^\sigma$.
\end{proposition}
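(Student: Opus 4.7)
The plan is to mimic the proof given above for $\BB_\M^\sigma$, replacing the orthonormal basis $\{\Vec\Xhat{n,m}{\pm}\}$ of $\Vec\M^\sigma$ by the orthonormal basis $\{\ZZhat{n,m}{\pm}\}$ of $\N^\sigma$ furnished by Theorem \ref{th:contragenicbasis}. The kernel \eqref{eq:bergmankercontrag} is assembled from this basis according to the generic recipe \eqref{eq:bmatrix}--\eqref{eq:Bfcalc}, so most of the argument is already packaged in the general construction of Section \ref{sec:bergman} and only needs to be invoked.

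First I would verify pointwise convergence of the double series defining $b_{\N,j}^\sigma(x,y)$ by exactly the argument used in Section \ref{sec:bergmanvecM}: Parseval's formula applied to $y\mapsto b_{\N,j}^\sigma(x,\cdot)$, combined with the standard bound $\vert f(x)\vert\le C_\delta\|f\|_\sigma$ for harmonic $f\in L^2(\B^\sigma)$ valid at distance $\delta$ from $\partial\B^\sigma$, gives absolute and locally uniform convergence on $\B^\sigma$. Then the reproducing property $\BB_\N^\sigma[f]=f$ for $f\in\N^\sigma$ is a direct specialization of \eqref{eq:Bfcalc} with $\psi_k=\ZZhat{n,m}{\pm}$: expand $f=\sum a_{n,m}^\pm\ZZhat{n,m}{\pm}$, factor $[\ZZhat{n,m}{\pm}]_j(x)$ out of the integrand, interchange sum and integral, and collapse the result using the scalar-valued orthonormality of $\{\ZZhat{n,m}{\pm}\}$ under \eqref{eq:scalarproduct}. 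That $\BB_\N^\sigma[f]\in\N^\sigma$ for \emph{every} $f\in L^2(\B^\sigma,\A_2)$ follows because each $b_{\N,j}^\sigma(x,\cdot)$ is a locally uniform limit of finite linear combinations of basic contragenics, hence contragenic in the $x$-variable; a Fubini argument then gives $\langle\BB_\N^\sigma[f],g\rangle_\sigma=0$ for every ambigenic $g$.

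The two ``if and only if'' statements follow at once from the orthogonal decomposition $L^2(\B^\sigma,\A_2)=\Vec\M^\sigma\oplus\N^\sigma\oplus\Har(\B^\sigma,\A_2)^\perp$: $\BB_\N^\sigma$ is the identity on the middle summand by the reproducing property; it vanishes on $\Vec\M^\sigma$ by orthogonality of the $\ZZhat{n,m}{\pm}$ to the basic monogenics and their conjugates (Theorem \ref{th:contragenicbasis}); and it vanishes on $\Har(\B^\sigma,\A_2)^\perp$ because $b_{\N,j}^\sigma(x,\cdot)$ is harmonic in $y$. Hence $\BB_\N^\sigma[f]=f$ forces $f$ to equal its $\N^\sigma$-component, and if $f$ is harmonic with $\BB_\N^\sigma[f]=0$, then the $\N^\sigma$-component of $f$ vanishes, leaving $f\in\Vec\M^\sigma$.

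The main obstacle I anticipate is the convergence estimate in the exterior case $\sigma=\sige$, where the basis of $\N^\sige$ contains both the vector-valued $\ZZ{n,m}{\pm}$ with norms \eqref{eq:Znorms} and the scalar contragenics $\ZZ{n,-n+1}{\pm}$ whose $L^2$ norms are governed by a separate $\Gamma$-function expression; both families must contribute summable tails in the Parseval estimate. This is the only point where the interior/exterior dichotomy enters in an essential way, and once the bound is established the rest of the proof is purely formal Hilbert-space bookkeeping.
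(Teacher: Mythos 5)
Your argument is correct and is essentially the paper's own: the paper likewise delegates convergence, the reproducing property, and the projection statements to the general construction of Section \ref{sec:bergman} applied to the orthonormal basis $\{\ZZhat{n,m}{\pm}\}$ from Theorem \ref{th:contragenicbasis}, and establishes that $\BB_\N^\sigma[f]$ is contragenic by the same Fubini argument you describe. The only small remark is that the exterior scalar contragenics have vanishing $e_1,e_2$ components, so they contribute nothing to \eqref{eq:bergmankercontrag} and the separate tail estimate you anticipated for them is not actually needed.
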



\section{Examples\label{sec:examples}}

In this section, we present numerical  examples of the application of the Bergman operator $\BB_\M^\sigma$, calculated with \textit{Mathematica}.
In \cite[Eq.\ (10)]{FCM} the following monogenic version of the exponential function is given:
\begin{align}
  \E(x_0,x_1,x_2)&= e^{x_0}\bigg(
    \cos\big(\frac{x_1}{\sqrt{2}}\big)\cos\big(\frac{x_2}{\sqrt{2}}\big)\nonumber\\
   &\quad +\frac{1}{\sqrt{2}}\bigg(
      \sin\big(\frac{x_1}{\sqrt{2}}\big)\cos\big(\frac{x_2}{\sqrt{2}}\big)e_1
     +\cos\big(\frac{x_1}{\sqrt{2}}\big)\sin\big(\frac{x_2}{\sqrt{2}}\big)e_2
   \bigg)\bigg).  \label{eq:expmalonek}
\end{align}

It is immediate that the function
\begin{align*}
 \E^*(x_0,x_1,x_2) =  \E(-x_0,x_1,x_2)
\end{align*}
is also monogenic, and that  $\Vec\E$, $\Vec\E^*$ are in $L^2(\B^i)$,
$L^2(\B^e)$ respectively (see Figure \ref{fig:vecexp}).

\begin{figure}[!hb]
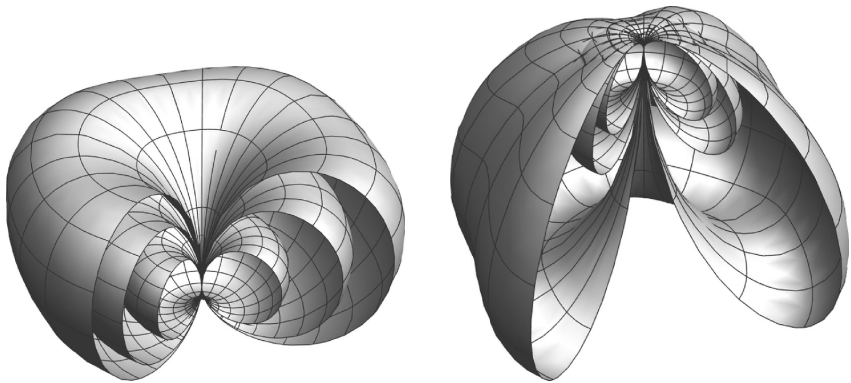

  \centering
  \pic{fig1_vecexpplot}{-8.5,-6.8}{4.7}{scale=.2}
  \caption{Left: level surfaces of $\vert\Vec\E \vert$ for fixed values $\rho=0.5$, $0.8$, and $1.0$. Right: level surfaces of $\vert\Vec\E^*\vert$ for fixed values $\rho=1.0$, $1.25$, and $2.0$. }
  \label{fig:vecexp}
\end{figure}

\begin{table}[b!]
\[ \begin{array}{|c|c|c|c|c|c|c|c|l|}
\hline
\rho & N=5 & N=10 & N=15 & N=20\\
\hline
0.2 & 1.12\times10^{-5}&8.39\times10^{-6}& 8.39\times10^{-6}&8.39\times10^{-6}\\  
0.4& 1.28\times10^{-4} & 2.05\times10^{-5} & 2.02\times10^{-5} & 2.02\times10^{-5} \\  
0.6& 9.82\times10^{-4} & 2.51\times10^{-5} & 2.53\times10^{-5} & 2.48\times10^{-5} \\  
0.8& 4.86\times10^{-3} & 4.50\times10^{-4} & 9.93\times10^{-4} & 9.77\times10^{-4}\\ \hline 
\end{array} \]
\caption{Bergman operator for $\Vec\M^\sigi$: relative errors comparing $\Vec\E$ and truncated version of
  $\BB_\M^\sigi\Vec\E$, along spheres of radii $\rho$.}
  \label{tab:monogenicexp1}
\end{table}

In Tables \ref{tab:monogenicexp1} and \ref{tab:contragenicexp1} we
show the results of applying $\BB_\M^\sigi$ and $\BB_M^\sige$ to
$\Vec\E$, and then comparing the images to $\Vec\E$
and 0 respectively.  The generating formulas \eqref{eq:bergmanopmonog} and
\eqref{eq:bergmanopmonog} are truncated to the ranges $0\le n\le N$.
On each sphere of radius $\rho$, a selection of points approximately
uniformly distributed in $[0,\pi]\times[0,2\pi]$ were used to calculate
the maximum deviations. The reproducing property and the annihilating
property of the the two kernels are confirmed within roundoff and truncation error.

The closeness of the approximation can also be seen in Figures
\ref{fig:bergmanapproxint} and \ref{fig:bergmanapproxext}, which show
the images of the unit sphere $\partial\B^\sigi=\partial\B^\sige$
under the integral transform given by truncations of the series
\eqref{eq:bergmankercontrag} to a few terms.

\begin{table}[t!]
\[ \begin{array}{|c|c|c|c|c|c|c|c|c|c|l|}
\hline
\rho & N=15 & N=20 & N+25 & N=30\\
\hline
0.2 & 4.94\times10^{-8}&4.94\times10^{-8}&4.94\times10^{-8}&4.94\times10^{-8}\\  
0.4& 1.94\times10^{-7} & 1.94\times10^{-7}& 1.94\times10^{-7} &1.94\times10^{-7} \\  
0.6&  8.99\times10^{-7} & 8.96\times10^{-7} & 8.96\times10^{-7} &8.96\times10^{-7}  \\  
0.8& 4.83\times10^{-6} & 5.08\times10^{-6} & 4.99\times10^{-6} & 5.05\times10^{-6} \\\hline
\end{array} \]
\caption{ $\Vec\N^\sigi$: maximum values of the quotients of approximations of the $e_1$-component,
  $\vert[\BB_\N^\sigi \E]_1\vert/\vert[\E]_1\vert$.}
  \label{tab:contragenicexp1}
\end{table}

\begin{figure}[!ht]
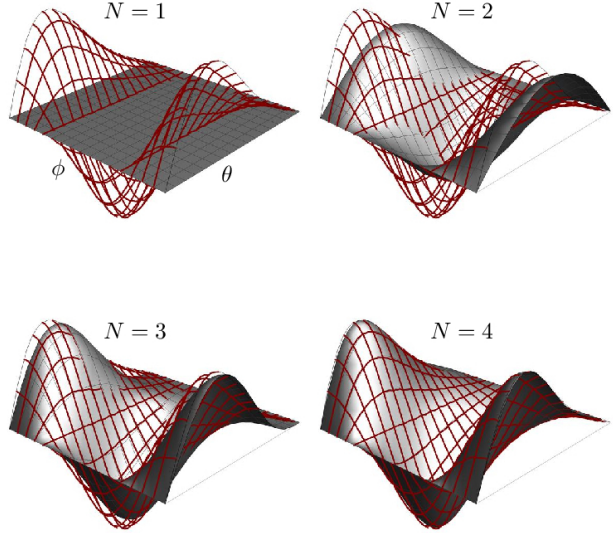

  \centering
  \pic{fig2_bergmanapprox-interior}{-6.,-9.6}{6.5}{scale=1.3}
  \caption{Approximations using terms up to degree $N$ in
    $\BB_\M^\sigi(\Vec\E)$, graphed for $\rho=1$ and $(\theta,\phi)\in[0,\pi]\times[0,2\pi]$, $1\le N\le4$, and compared with the true value
  of $\Vec\E$.  For comparison, grid lines show exact values of  $\BB_\M^\sigi(\Vec\E)$.}
  \label{fig:bergmanapproxint}
\end{figure}

\begin{figure}[!ht]
  \centering
  \pic{fig3_bergmanapprox-exterior}{-6,-9.6}{6.5}{scale=1.3}
  \caption{Analogous approximations for  $\BB_\M^\sige(\Vec\E^*)$.}
  \label{fig:bergmanapproxext}
\end{figure}


\section{Conclusions}

In the process of filling in the gaps of the known results on
square-integrable monogenic and contragenic functions defined on the
interior or the exterior of the unit ball, we have been able to
highlight the similarities and differences of the two cases.  In doing
so, we  uncovered an interesting duality relation among the basic
orthogonal functions (Proposition \ref{prop:duality}). This duality
relation permits expressing each of the four Bergman kernels (for
$\Vec \mathcal{M}$ and $\N$ corresponding to the interior and exterior of the sphere,
respectively) in two different ways.  All of these kernels serve to
detect contragenic functions, either by annihilating them or by
reproducing them as the case may be.

The exterior of the ball was found to admit contragenic functions which are not purely vectorial. This discovery leads one to pose the question of classifying domains in three-space according to whether they do or do not admit such contragenic functions.

The original kernel of S.\ Bergman, as described in
\cite{Bergman1950}, is defined for any planar domain via a series of
orthogonal holomorphic functions analogous to
\eqref{eq:bergmanopmonog}, but for the case of the unit disk in the
plane it takes the particularly elegant form
$1/(\pi(1-z\overline{w})^2)$. This leads one to wonder whether the
series expansions \eqref{eq:bergmanopmonog} and
\eqref{eq:bergmankercontrag} can admit similar expressions in closed
form.


\section{Acknowledgements}

\def\spc{\hspace{.7ex plus .2ex minus .5ex}}
The\spc second\spc author\spc acknowledges\spc financial\spc support\spc by\spc the\spc Asociaci\'on\spc Mexicana\spc de\spc Cultura,\spc A.\spc C.


\newcommand{\authorlist}[1]{#1,}
\newcommand{\booktitle}[1]{\textit{#1}.}
\newcommand{\articletitle}[1]{#1}
\newcommand{\journaltitle}[1]{\textit{#1}}
\newcommand{\volnum}[1]{\textbf{#1}}


\end{document}